\documentclass[12pt]{amsart}
\usepackage[latin1]{inputenc}
\usepackage{amssymb,enumerate,amsthm,yfonts,mathrsfs}
\usepackage{amsmath}
\usepackage{bbm}           
\usepackage{bm} 
\usepackage{eso-pic,graphicx}
\usepackage{tikz}

\usepackage[colorlinks=true, pdfstartview=FitV, linkcolor=blue, citecolor=blue, urlcolor=blue]{hyperref}

\makeatletter
\def\Ddots{\mathinner{\mkern1mu\raise\p@
\vbox{\kern7\p@\hbox{.}}\mkern2mu
\raise4\p@\hbox{.}\mkern2mu\raise7\p@\hbox{.}\mkern1mu}}
\makeatother

\def\Xint#1{\mathchoice
{\XXint\displaystyle\textstyle{#1}}%
{\XXint\textstyle\scriptstyle{#1}}%
{\XXint\scriptstyle\scriptscriptstyle{#1}}%
{\XXint\scriptscriptstyle\scriptscriptstyle{#1}}%
\!\int}
\def\XXint#1#2#3{{\setbox0=\hbox{$#1{#2#3}{\int}$}
\vcenter{\hbox{$#2#3$}}\kern-.5\wd0}}

\def\dashint{\Xint-}

\newtheorem{theorem}{Theorem}[section]
\newtheorem{nonum}{Theorem}

\newtheorem{conjecture}[theorem]{Conjecture}

\theoremstyle{definition}

\newtheorem{question}[theorem]{Question}
\newtheorem{observation}[theorem]{Observation}
\newtheorem{lemma}[theorem]{Lemma}

\setlength{\topmargin}{-0.2in} \setlength{\textwidth}{5in}

\def\ep{{\epsilon}}

\def\al{{\alpha}}
\def\R{\mathbb R}

\def\Z{\mathbb Z}

\def\ra{\rightarrow}
\def\bey{\begin{eqnarray*}}
\def\eey{\end{eqnarray*}}
 
 \allowdisplaybreaks

\def\D{{\mathscr D}}

\def\Q{{\mathcal Q}}
\def\Ca{{\mathcal C}}

\def\Sp{{\mathcal S}}

\begin{document}

\title[Mixed estimates with one supremum]{Mixed $A_p$-$A_\infty$ estimates with one supremum}
\author{Andrei K. Lerner and Kabe Moen }
 \address{Department of Mathematics, Bar-Ilan University, 52900 Ramat Gan, Israel}
\email{aklerner@netvision.net.il}

 \address{Department of Mathematics, University of Alabama, Tuscaloosa, AL 35487-0350 }
\email{kmoen@as.ua.edu}

\subjclass[2010]{42B20, 42B25}

\keywords{Sharp weighted inequalities, Calder\'on-Zygmund operators, $A_p$ weights, $A_\infty$ weights.}

\thanks{The second author is partially supported by the NSF under grant 1201504}

\begin{abstract} We establish several mixed $A_p$-$A_\infty$ bounds for Calder\'on-Zygmund operators that only involve one supremum.  We address both cases when the $A_\infty$ part of the constant is measured using the exponential-logarithmic definition and using the Fujii-Wilson definition.  In particular, we answer a question of the first author and provide an answer, up to a logarithmic factor, of a conjecture of Hyt\"onen and Lacey.  Moreover, we give an example to show that our bounds with the logarithmic factors can be arbitrarily smaller than the previously known bounds (both one supremum and two suprema).

\end{abstract}

\maketitle

\section{Introduction}
 Hyt\"onen's \cite{Hy2011} recent solution of the $A_2$ conjecture, states that any Calder\'on-Zygmund operator satisfies the following bound on weighted Lebesgue spaces:
\begin{equation}\label{A2}\|T\|_{L^p(w)}\lesssim  [w]_{A_p}^{\max(1,\frac{1}{p-1})}.\end{equation}
Recently inequality \eqref{A2} has seen several improvements.  These come in the form of the so-called ``mixed estimates''.  The idea behind the mixed estimates is that one only needs the full strength of the $A_p$ constant for part of the estimates, while the other part only requires something weaker.   The smaller quantities come in the form of $A_r$ constants for large $r$ or $A_\infty$ constants.  Below we will attempt to describe these results.

First we require some terminology.  A weight will be a nonnegative locally integrable function.  Given a weight $w$, exponent $1< p<\infty$, and cube $Q$, define the precursor to the $A_p$ constant as
$$A_p(w,Q)=\Big(\,\dashint_Q w\,\Big)\Big(\,\dashint_Q w^{-\frac{1}{p-1}}\,\Big)^{p-1}=\frac{w(Q)\sigma(Q)^{p-1}}{|Q|^p}$$
where $\sigma=w^{-\frac{1}{p-1}}$.  When $p=1$ we define the limiting quantity as 
$$A_1(w,Q)=\Big(\,\dashint_Q w\,\Big)(\inf_Q w)^{-1}=\lim_{p\ra 1}A_p(w,Q).$$ 
For $p=\infty$ we will consider two constants.  The first constant is defined as a limit of the $A_p(w,Q)$ constants: 
$$A^{\exp}_\infty(w,Q)=\Big(\,\dashint_Q w\,\Big)\exp\Big(\,\dashint_Q \log w^{-1}\,\Big)=\lim_{p\ra \infty} A_p(w,Q).$$
For the second constant let 
$$A_\infty(w,Q)=\frac{1}{w(Q)}\int_Q M(w\chi_Q)\,$$
where $M$ is the Hardy-Littlewood maximal operator (see Section \ref{prelim}).  By Jensen's inequality we see that the quantities $A_p(w,Q)$ decrease as $p$ increases.
Define the following constants:
$$ [w]_{A_p}=\sup_Q A_p(w,Q),$$
$$[w]_{A^{\exp}_\infty}=\sup_Q A^{\exp}_\infty(w,Q),$$
and
$$[w]_{A_\infty}=\sup_{Q}A_\infty(w,Q).$$
We write $w\in A_p$ if $[w]_{A_p}<\infty$ and $w\in A_\infty$ if $[w]_{A_\infty^{\exp}}<\infty$ or $[w]_{A_\infty}<\infty$.  The constant $[w]_{A^{\exp}_\infty}$ was defined by Hru{\v{s}}{\v{c}}ev in \cite{MR727244}.  The constant $[w]_{A_\infty}$ was defined by Fujii \cite{MR0481968} and Wilson \cite{MR883661,MR2359017}, who also showed that both constants define the class $A_\infty$.  Hyt\"onen and P\'erez \cite{HP2011} proved the quantitative upper bound 
\begin{equation}\label{eqn:Ainftyrel}[w]_{A_\infty}\lesssim [w]_{A^{\exp}_\infty}\end{equation}
and provided examples to show that $[w]_{A^{\exp}_\infty}$ can be exponentially larger than $[w]_{A_\infty}$ (see also Beznosova and Reznikov \cite{BR2011}).  While inequality \eqref{eqn:Ainftyrel} holds, it is not clear what the relationship is between $A^{\exp}_\infty(w,Q)$ and $A_\infty(w,Q)$ for a fixed cube $Q$.  Hereafter we will refer to constants that contain a quantity depending on $A^{\exp}_\infty(w,Q)$ as exponential $A_\infty$ constants and constants depending on $A_\infty(w,Q)$ as simply $A_\infty$ constants. 
 
Let us now define the mixed type constants.  Given $1\leq p<\infty$ and real numbers $\al,\beta$ define the  mixed constants:
$$[w]_{(A_p)^\al (A_r)^\beta}=\sup_Q A_p(w,Q)^\al A_r(w,Q)^\beta, \quad 1\leq r\leq\infty,$$
and the exponential mixed constants:
$$[w]_{(A_p)^\al (A_\infty^{\exp})^\beta}=\sup_Q A_p(w,Q)^\al A_\infty^{\exp}(w,Q)^\beta.$$

Inequalities involving constants of the form
$$[w]_{(A_p)^\al(A_r)^\beta}, \ [w]_{(A_p)^\al(A_\infty)^\beta}, \  \text{or} \quad [w]_{(A_p)^\al(A_\infty^{\exp})^\beta}$$
will be said to be one supremum estimates.  Whereas estimates containing products of separate constants such as
$$[w]_{A_p}^\al [w]_{A_\infty}^\beta \quad \text{or} \qquad [w]_{A_p}^\al [w]_{A_\infty^{\exp}}^\beta$$
will be referred to as two suprema estimates.

 The pioneer work on mixed constants involving $A_\infty$ was done by Hyt\"onen and P\'erez \cite{HP2011}.  For the Hardy-Littlewood maximal operator they were able to prove two estimates.  The first, a one supremum estimate containing the exponential mixed constant.
 \begin{nonum}[\cite{HP2011}] If $1<p<\infty$ and $w\in A_p$ then
\begin{equation}\label{eqn:maxexp0} \|M\|_{L^p(w)}\lesssim [\sigma]_{(A_{p'})^{\frac{1}{p'}}(A^{\exp}_\infty)^{\frac{1}{p}}}.\end{equation}
\end{nonum}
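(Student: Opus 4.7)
My plan is to reduce \eqref{eqn:maxexp0} to a weighted Carleson-type embedding whose packing condition can be verified by the $L^{1}$ bound for the geometric maximal operator (Knopp's inequality). First, I would replace $M$ by a dyadic maximal operator $M^d$, at the cost of summing over finitely many shifted grids. For $f\ge 0$, set $\Omega_k=\{M^d f>2^k\}$ and let $\{Q_j^k\}_j$ denote the maximal dyadic cubes in $\Omega_k$. Then $2^k<\dashint_{Q_j^k} f\le 2^n\cdot 2^k$, and the sets $E_j^k=Q_j^k\setminus \Omega_{k+1}$ are pairwise disjoint with $|E_j^k|\ge \tfrac12|Q_j^k|$. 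The standard Calder\'on--Zygmund bookkeeping yields
\[
\|M^d f\|_{L^p(w)}^p \;\lesssim\; \sum_{k,j}\Bigl(\dashint_{Q_j^k} f\Bigr)^{\!p}\,w(Q_j^k).
\]

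Next, writing $f=g\sigma$ (so that $\|g\|_{L^p(\sigma)}=\|f\|_{L^p(w)}$), the identity $w(Q)=A_{p'}(\sigma,Q)^{p-1}|Q|^p/\sigma(Q)^{p-1}$ gives
\[
\Bigl(\dashint_{Q}g\sigma\Bigr)^{\!p} w(Q) \;=\; A_{p'}(\sigma,Q)^{p-1}\sigma(Q)\bigl(\langle g\rangle^{\sigma}_Q\bigr)^{p},\qquad \langle g\rangle^{\sigma}_Q:=\tfrac{1}{\sigma(Q)}\int_Q g\,\sigma.
\]
By the very definition of the mixed constant, $A_{p'}(\sigma,Q)^{p-1}\,A^{\exp}_\infty(\sigma,Q)\le [\sigma]_{(A_{p'})^{1/p'}(A^{\exp}_\infty)^{1/p}}^{\,p}$, so the theorem reduces to the weighted Carleson-type inequality
\[
\sum_{k,j}\frac{\sigma(Q_j^k)}{A^{\exp}_\infty(\sigma,Q_j^k)}\bigl(\langle g\rangle^{\sigma}_{Q_j^k}\bigr)^{p}\;\lesssim\;\|g\|_{L^p(\sigma)}^{\,p}.
\]
The Carleson embedding theorem with measure $d\mu=\sigma$ further reduces this to the packing bound $\sum_{Q_j^k\subset R}\sigma(Q_j^k)/A^{\exp}_\infty(\sigma,Q_j^k)\lesssim\sigma(R)$ for every dyadic cube $R$.

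For the packing step, the crucial identity is
\[
\frac{\sigma(Q)}{A^{\exp}_\infty(\sigma,Q)}\;=\;|Q|\exp\!\Bigl(\dashint_Q\log\sigma\Bigr).
\]
Combined with the sparseness $|E_j^k|\ge\tfrac12|Q_j^k|$ and the disjointness of the $E_j^k$, one gets
\[
\sum_{Q_j^k\subset R}|Q_j^k|\exp\!\Bigl(\dashint_{Q_j^k}\!\log\sigma\Bigr)\;\le\; 2\int_R M^{0}(\sigma\chi_R)\,dx,
\]
where $M^0 h(x)=\sup_{Q\ni x}\exp(\dashint_Q\log h)$ is the geometric maximal operator. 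The classical $L^{1}$ bound $\|M^0 h\|_{L^1}\lesssim\|h\|_{L^1}$ (Knopp/Cordes), with an absolute constant, then closes the packing estimate.

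The main obstacle is precisely this last step: to preserve the one-supremum form, the packing constant must be \emph{absolute}, independent of any $A_\infty$-type quantity of $\sigma$. A trivial use of $A^{\exp}_\infty(\sigma,Q)\ge 1$ would reintroduce $[\sigma]_{A^{\exp}_\infty}$ outside the supremum and collapse the argument back to the two-suprema bound of \cite{HP2011}. Recognising $1/A^{\exp}_\infty(\sigma,Q)$ as exactly the quantity controlled by the geometric maximal function, and appealing to Knopp's inequality, is what delivers the universal constant and hence the one-supremum estimate; everything else is routine bookkeeping.
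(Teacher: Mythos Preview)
The paper does not prove this statement; it is quoted from \cite{HP2011} and used only as background. There is therefore no ``paper's own proof'' to compare against. That said, your argument is correct and is essentially the Hyt\"onen--P\'erez proof: linearise $M^d$ via Calder\'on--Zygmund stopping cubes, rewrite $(\dashint_Q g\sigma)^p w(Q)=A_{p'}(\sigma,Q)^{p-1}\sigma(Q)(\langle g\rangle_Q^\sigma)^p$, pull out the mixed constant, and reduce to a $\sigma$-Carleson embedding whose packing condition follows from the identity $\sigma(Q)/A_\infty^{\exp}(\sigma,Q)=|Q|\exp(\dashint_Q\log\sigma)$ together with sparseness and the $L^1$ boundedness of the geometric maximal operator $M_0$.

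It is worth noting that this same ingredient---the $L^1$ bound for $M_0$---is exactly what the present paper exploits in its proofs of Theorems~\ref{thm:testexp1} and~\ref{thm:testexp0} to handle the exponential $A_\infty$ part without losing the one-supremum structure. So your proof, while not in the paper, is fully consistent with the machinery the paper relies on. One small point of care: the stopping family $\{Q_j^k\}$ depends on $g$, so when you invoke the Carleson embedding you should phrase the packing condition as a uniform bound over all sparse subfamilies of the dyadic grid (as you implicitly do), rather than for the single $g$-dependent family; this is harmless but worth stating explicitly.
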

Second, they prove the following two suprema estimate.
  \begin{nonum}[\cite{HP2011}] If $1<p<\infty$ and $w\in A_p$ then
\begin{equation}\label{eqn:maxW2}\|M\|_{L^p(w)}\lesssim [\sigma]_{A_{p'}}^{\frac{1}{p'}}[\sigma]_{A_\infty}^{\frac{1}{p}}=([w]_{A_p}[\sigma]_{A_\infty})^{\frac1p}.\end{equation}
\end{nonum}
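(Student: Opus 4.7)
The plan is to dyadicize and combine a standard sparse-domination of the maximal operator with a Carleson-embedding argument in the measure $\sigma = w^{-1/(p-1)}$. After reducing $M$ to a finite sum of dyadic maximal operators $M^d$ via the shifted-grid trick, I would perform a stopping-cube decomposition for $f \geq 0$ with doubling parameter $2$: at each step one selects the maximal dyadic subcubes $Q'$ of a given $Q$ on which $\dashint_{Q'} f > 2 \dashint_Q f$. The resulting family $\mathcal{S}$ is sparse --- the pieces $E_Q = Q \setminus \bigcup_{Q' \in \mathrm{ch}_{\mathcal{S}}(Q)} Q'$ are pairwise disjoint with $|E_Q| \geq \tfrac12|Q|$ --- and a routine layer-cake argument yields the sparse bound
\[
\int (M^d f)^p\, w \lesssim \sum_{Q \in \mathcal{S}} \Bigl(\dashint_Q f\Bigr)^p w(Q).
\]

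Next I would switch to the dual variable by writing $f = \sigma g$, which gives the isometry $\|f\|_{L^p(w)} = \|g\|_{L^p(\sigma)}$. The algebraic identity
\[
\Bigl(\dashint_Q \sigma g\Bigr)^p w(Q) = \frac{w(Q)\sigma(Q)^{p-1}}{|Q|^p}\,\sigma(Q)\Bigl(\frac{1}{\sigma(Q)}\int_Q g\,\sigma\Bigr)^p = A_p(w,Q)\,\sigma(Q)\Bigl(\dashint_Q^{\sigma} g\Bigr)^p
\]
lets me factor out one power of $[w]_{A_p}$. The remaining task is the weighted Carleson-type embedding
\[
\sum_{Q \in \mathcal{S}} \sigma(Q)\Bigl(\dashint_Q^{\sigma} g\Bigr)^p \lesssim [\sigma]_{A_\infty}\,\|g\|_{L^p(\sigma)}^p,
\]
which, by the Carleson embedding theorem in the measure $\sigma$, reduces to the packing estimate $\sum_{Q \in \mathcal{S},\, Q \subset R} \sigma(Q) \lesssim [\sigma]_{A_\infty}\,\sigma(R)$ for every $R \in \mathcal{S}$.

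The heart of the argument --- and the place where the Fujii--Wilson constant is tailor-made to enter --- is this packing estimate. Using $|Q| \leq 2|E_Q|$, the disjointness of the $E_Q$'s, and the trivial pointwise bound $\sigma(Q)/|Q| \leq M(\sigma\chi_R)(x)$ for $x \in Q \subset R$,
\[
\sum_{Q \subset R} \sigma(Q) \leq 2\sum_{Q\subset R} \int_{E_Q} \frac{\sigma(Q)}{|Q|}\,dx \leq 2\int_R M(\sigma\chi_R) = 2\,\sigma(R)\,A_\infty(\sigma,R) \leq 2\,[\sigma]_{A_\infty}\,\sigma(R).
\]
Chaining the estimates yields $\|Mf\|_{L^p(w)}^p \lesssim [w]_{A_p}[\sigma]_{A_\infty}\|f\|_{L^p(w)}^p$; the equivalent form $[\sigma]_{A_{p'}}^{1/p'}[\sigma]_{A_\infty}^{1/p}$ stated in the theorem then follows from the standard $A_p$--$A_{p'}$ duality $[w]_{A_p}=[\sigma]_{A_{p'}}^{p-1}$. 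I expect the only step requiring a genuine ingredient, as opposed to routine bookkeeping, to be the packing estimate above: the fact that $[\sigma]_{A_\infty}$ enters there \emph{linearly} is exactly what produces the sharp exponent $1/p$ on the $A_\infty$ factor after taking $p$-th roots.
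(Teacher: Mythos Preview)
Your argument is correct and is essentially the original Hyt\"onen--P\'erez proof: sparse domination of $M$, the algebraic splitting $\bigl(\dashint_Q f\bigr)^p w(Q)=A_p(w,Q)\,\sigma(Q)\bigl(\dashint_Q^{\sigma}g\bigr)^p$, and then the Carleson packing $\sum_{Q\in\Sp,\,Q\subset R}\sigma(Q)\le 2\int_R M(\sigma\chi_R)\le 2[\sigma]_{A_\infty}\sigma(R)$ via the disjoint sets $E_Q$. One cosmetic remark: the Carleson embedding theorem asks for the packing estimate over \emph{all} dyadic $R$, not only $R\in\Sp$; this is of course immediate since one may pass to the maximal $\Sp$-cubes inside an arbitrary $R$, but it is worth saying.

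Note, however, that the present paper does \emph{not} supply a proof of this statement: it is quoted as Theorem~B from \cite{HP2011}. What the paper does prove is the one-supremum variant Theorem~\ref{thm:maxW}, and it is instructive to compare. That proof also begins from the sparse form $\sum_{Q\in\Sp(R)}\bigl(\sigma(Q)/|Q|\bigr)^p w(E_Q)$ and uses the same $\int_Q M(\sigma\chi_Q)$ trick, but instead of pulling the \emph{two}-suprema product $[w]_{A_p}[\sigma]_{A_\infty}$ out globally, it first slices the family $\Sp(R)$ into layers $\Q^a=\{Q: A_p(w,Q)\simeq 2^a\}$, so that on each layer the quantity $A_{p'}(\sigma,Q)^{p/p'}A_\infty(\sigma,Q)$ can be extracted at the level of a \emph{single} cube, yielding the one-supremum constant $[\sigma]_{(A_{p'})^{1/p'}(A_\infty)^{1/p}}$. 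The price is the sum over the $O(\log[w]_{A_p})$ layers, which produces the extra factor $\Phi([\sigma]_{A_{p'}})^{1/p}$. Your approach recovers the two-suprema bound cleanly and without that logarithm, which is exactly the cited result; the paper's contribution is the layered refinement.
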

Both of the inequalities \eqref{eqn:maxexp0} and \eqref{eqn:maxW2} improve Buckley's \cite{MR1124164} well known bound
$$\|M\|_{L^p(w)}\lesssim [\sigma]_{A_{p'}}=[w]_{A_p}^{\frac{1}{p-1}}.$$
A natural question is whether inequality \eqref{eqn:maxW2} can be replaced by a one supremum estimate.  In this vein we have our first result.  To state the following results we define the function
$$\Phi(t)=1+\log(t).$$
\begin{theorem} \label{thm:maxW} Suppose $1<p<\infty$ and $w\in A_p$, then
\begin{equation}\label{eqn:maxW} \|M\|_{L^p(w)}\lesssim \Phi([\sigma]_{A_{p'}})^{\frac1p}[\sigma]_{(A_{p'})^{\frac{1}{p'}}(A_\infty)^{\frac{1}{p}}}.\end{equation}
\end{theorem}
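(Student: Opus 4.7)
The plan is to combine Sawyer's two-weight testing theorem for $M$ with a principal-cube decomposition of $\sigma$, and then split the resulting sum into dyadic layers according to the size of the local $A_{p'}$ constant. The logarithmic factor $\Phi([\sigma]_{A_{p'}})$ will appear as (essentially) the number of such active layers.

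First I would reduce, via the substitution $f=g\sigma$ and Sawyer's testing theorem, to proving the inequality
\begin{equation*}
\int_{Q_0}M(\sigma\chi_{Q_0})^p w\,dx \lesssim \Phi([\sigma]_{A_{p'}})\,[\sigma]_{(A_{p'})^{1/p'}(A_\infty)^{1/p}}^p\,\sigma(Q_0)
\end{equation*}
for every cube $Q_0$. Passing to a dyadic maximal function through a finite number of shifted grids, I would build the principal cubes $\mathcal{F}$ of $\sigma$ inside $Q_0$ by the usual stopping rule (doubling of $\sigma$-averages). The associated sets $E_F$ are pairwise disjoint, satisfy $|E_F|\geq |F|/2$, and $M^d(\sigma\chi_{Q_0})\leq 2\sum_{F\in\mathcal{F}}(\sigma(F)/|F|)\chi_{E_F}$. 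Using the elementary identity $(\sigma(F)/|F|)^p w(F)=A_p(w,F)\sigma(F)=A_{p'}(\sigma,F)^{p-1}\sigma(F)$, this reduces matters to estimating
\begin{equation*}
S:=\sum_{F\in\mathcal{F}}A_{p'}(\sigma,F)^{p-1}\sigma(F).
\end{equation*}

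The key lemma is a local Carleson estimate: for any dyadic $F^*\subseteq Q_0$, the sparseness of $\{E_F\}$ together with the pointwise bound $\sigma(F)/|F|\leq M(\sigma\chi_{F^*})(x)$ for $x\in F\subseteq F^*$ yields
\begin{equation*}
\sum_{F\in\mathcal{F},\,F\subseteq F^*}\sigma(F)\leq 2\int_{F^*}M(\sigma\chi_{F^*})=2\,A_\infty(\sigma,F^*)\sigma(F^*).
\end{equation*}
Crucially, $A_\infty$ appears here \emph{locally} on $F^*$, not globally. Now I would split $\mathcal{F}=\bigsqcup_{k\geq 0}\mathcal{F}_k$, where $\mathcal{F}_k=\{F\in\mathcal{F}:2^{k-1}<A_{p'}(\sigma,F)\leq 2^k\}$; only $k\leq K_0\lesssim\log_2[\sigma]_{A_{p'}}+1$ can be nonempty. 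Writing $K=[\sigma]_{(A_{p'})^{1/p'}(A_\infty)^{1/p}}$, the definition of the mixed constant forces $A_\infty(\sigma,F)\leq K^p A_{p'}(\sigma,F)^{-(p-1)}\lesssim K^p 2^{-k(p-1)}$ on $\mathcal{F}_k$. Applying the Carleson estimate to each (pairwise disjoint) maximal cube $F^*$ of $\mathcal{F}_k$ gives
\begin{equation*}
\sum_{F\in\mathcal{F}_k}\sigma(F)\leq 2\sum_{F^*}A_\infty(\sigma,F^*)\sigma(F^*)\lesssim K^p 2^{-k(p-1)}\sigma(Q_0).
\end{equation*}
Multiplying by $2^{k(p-1)}$ and summing over $k\leq K_0$ produces a factor $K_0+1\lesssim\Phi([\sigma]_{A_{p'}})$, yielding $S\lesssim\Phi([\sigma]_{A_{p'}})K^p\sigma(Q_0)$, which is the claimed bound.

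The main obstacle is preserving the local character of $A_\infty(\sigma,F)$: naively dominating $A_\infty(\sigma,F^*)\leq[\sigma]_{A_\infty}$ collapses the argument back to the two-suprema bound \eqref{eqn:maxW2}. The dyadic layering of $\mathcal{F}$ by the size of $A_{p'}(\sigma,\cdot)$ is the mechanism that lets a \emph{local} $A_\infty$-bound be converted into a single-supremum mixed constant; the logarithmic loss $\Phi$ is precisely the price paid to span the full range of $A_{p'}(\sigma,\cdot)$-values with one supremum.
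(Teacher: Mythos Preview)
Your proposal is correct and follows essentially the same route as the paper: Sawyer's testing reduction, a sparse (principal-cube) linearization of $M(\sigma\chi_{Q_0})$, a dyadic layering of the family by the size of the local $A_{p'}(\sigma,\cdot)$ constant (equivalently the paper's $A_p(w,\cdot)$, since $A_{p'}(\sigma,Q)^{p-1}=A_p(w,Q)$), and on each layer the local Carleson estimate $\sum_{F\subseteq F^*}\sigma(F)\lesssim A_\infty(\sigma,F^*)\sigma(F^*)$ applied to the maximal cubes so that $A_{p'}(\sigma,F^*)^{p-1}A_\infty(\sigma,F^*)$ recombines into the single-supremum mixed constant. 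The logarithmic factor $\Phi([\sigma]_{A_{p'}})$ then arises in both arguments as the number of nonempty layers.
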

\noindent We do not know whether or not the logarithmic factor in \eqref{eqn:maxW} is necessary, that is, we do not know how to remove the logarithmic factor or find an example showing it is necessary.  

For Calder\'on-Zygmund operators much less is known about one supremum estimates.  Hyt\"onen and P\'erez proved a two suprema estimate when $p=2$ which was later extended to $1<p<\infty$, first for the Hilbert transform by Lacey \cite{L2011}, and then for general Calder\'on-Zygmund operators by Lacey and Hyt\"onen \cite{HL2011}. 
\begin{nonum}[\cite{HL2011,HP2011,L2011}] \label{thm:hlp} If $1<p<\infty$, $T$ is Calder\'on-Zygmund operator, and $w\in A_p$, then
\begin{equation}\label{eqn:LHPmixed}\|T\|_{L^p(w)}\lesssim [w]_{A_p}^{\frac1p}([w]_{A_\infty}^{\frac{1}{p'}}+[\sigma]_{A_\infty}^{\frac1p}).\end{equation}
\end{nonum}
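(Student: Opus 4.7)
The plan is to reduce bounding a Calder\'on--Zygmund operator to bounding a sparse (positive dyadic) operator, and then to verify two Sawyer-type testing conditions whose sum reproduces the right-hand side of \eqref{eqn:LHPmixed}. The first step invokes Hyt\"onen's dyadic representation theorem \cite{Hy2011}---or equivalently Lerner's sparse domination $|Tf(x)|\lesssim\sum_{Q\in\mathcal{S}}\langle|f|\rangle_Q\chi_Q(x)=:A_{\mathcal{S}}|f|(x)$, with supremum over finitely many sparse families---to reduce the task to proving, uniformly in the sparse family $\mathcal{S}$,
\[
\|A_{\mathcal{S}}(\cdot\,\sigma)\|_{L^p(\sigma)\to L^p(w)}\lesssim[w]_{A_p}^{1/p}\bigl([w]_{A_\infty}^{1/p'}+[\sigma]_{A_\infty}^{1/p}\bigr),
\]
where the substitution $f=h\sigma$ identifies $\|f\|_{L^p(w)}=\|h\|_{L^p(\sigma)}$.

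Next, by the Sawyer-type characterization for positive dyadic operators (Nazarov--Treil--Volberg; Lacey--Sawyer--Uriarte-Tuero), the norm above is comparable to the sum of the direct and dual testing constants
\[
\mathscr{T}:=\sup_R\frac{\|A_{\mathcal{S}}(\sigma\chi_R)\|_{L^p(w)}}{\sigma(R)^{1/p}},\qquad \mathscr{T}^{*}:=\sup_R\frac{\|A_{\mathcal{S}}(w\chi_R)\|_{L^{p'}(\sigma)}}{w(R)^{1/p'}},
\]
so the plan is to prove the asymmetric bounds $\mathscr{T}\lesssim[w]_{A_p}^{1/p}[\sigma]_{A_\infty}^{1/p}$ and $\mathscr{T}^{*}\lesssim[w]_{A_p}^{1/p}[w]_{A_\infty}^{1/p'}$, whose sum recovers \eqref{eqn:LHPmixed}.

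To bound $\mathscr{T}$, I would run a principal-cube (stopping-time) construction with respect to $\sigma$ inside $R$, producing a sparse subfamily $\mathcal{P}\subset\mathcal{S}$ along whose towers $\langle\sigma\rangle$ at most doubles. Using the pointwise identity $\langle\sigma\rangle_P^{p}\,w(P)=A_p(w,P)\,\sigma(P)\le[w]_{A_p}\sigma(P)$, one obtains
\[
\|A_{\mathcal{S}}(\sigma\chi_R)\|_{L^p(w)}^{p}\lesssim\sum_{P\in\mathcal{P}}\langle\sigma\rangle_P^{p}\,w(P)\le[w]_{A_p}\sum_{P\in\mathcal{P}}\sigma(P),
\]
where the first inequality collects contributions along each principal tower via standard dyadic-maximal-function estimates. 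The outer sum is controlled by the sharp Hyt\"onen--P\'erez Carleson embedding for sparse families,
\[
\sum_{P\in\mathcal{P},\,P\subset R}\sigma(P)\lesssim[\sigma]_{A_\infty}\,\sigma(R),
\]
which follows from the Fujii--Wilson inequality $\sigma(P)\lesssim[\sigma]_{A_\infty}\sigma(E_P)$ on the pairwise disjoint sparse subsets $E_P$. Taking $p$-th roots and dividing by $\sigma(R)^{1/p}$ yields $\mathscr{T}\lesssim[w]_{A_p}^{1/p}[\sigma]_{A_\infty}^{1/p}$. The dual testing $\mathscr{T}^{*}$ is handled by the symmetric argument with stopping time taken with respect to $w$ and the roles of $(w,p)$ and $(\sigma,p')$ exchanged.

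The main obstacle is arranging the principal-cube decomposition so that only one $A_\infty$ constant appears in each testing estimate, rather than the product $[w]_{A_\infty}^{1/p'}[\sigma]_{A_\infty}^{1/p}$ that a naive H\"older argument would produce. The stopping time must be taken with respect to the weight being tested against ($\sigma$ for $\mathscr{T}$, $w$ for $\mathscr{T}^{*}$), so that the Carleson embedding contributes exactly one $A_\infty$ factor, while the other weight's contribution is absorbed entirely into $[w]_{A_p}^{1/p}$. Verifying that the tower estimates truly cost no extra $A_\infty$ factor---that the dyadic-maximal step does not reintroduce $[w]_{A_\infty}$ on the direct side---is the technical heart of the argument.
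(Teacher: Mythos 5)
This statement is cited by the paper from Hyt\"onen--Lacey, Hyt\"onen--P\'erez, and Lacey rather than proven, so there is no ``paper proof'' to match against; what can be assessed is whether your sketch is a viable reconstruction of the known argument. Your outer scaffolding is indeed the standard one: reduce to a sparse operator $A_{\mathcal{S}}$, invoke the Lacey--Sawyer--Uriarte-Tuero testing characterization so that the strong norm is comparable to $\mathscr{T}+\mathscr{T}^*$, and then aim for the asymmetric bounds $\mathscr{T}\lesssim[w]_{A_p}^{1/p}[\sigma]_{A_\infty}^{1/p}$ and $\mathscr{T}^*\lesssim[w]_{A_p}^{1/p}[w]_{A_\infty}^{1/p'}$. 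These targets are exactly right, and they also follow immediately from the weak-type estimate \eqref{eqn:mixedweak2sup} together with the testing theorem and duality, which is how the cited papers organize the deduction.

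The gap is in the one step you yourself flag as ``the technical heart.'' You assert, via ``standard dyadic-maximal-function estimates,'' that a principal-cube decomposition with respect to $\sigma$ gives
\[
\Big\|\sum_{Q\in\mathcal{S}(R)}\langle\sigma\rangle_Q\chi_Q\Big\|_{L^p(w)}^p\lesssim\sum_{P\in\mathcal{P}}\langle\sigma\rangle_P^p\,w(P)
\]
with an absolute constant. This is not standard, and the two standard devices both fail to give it. If you apply the paper's corona lemma (Lemma~\ref{lem:corona}) with $a_Q=\langle\sigma\rangle_Q$, $\nu=w$, $r=p-1$, the balance hypothesis \eqref{eqn:balance} forces $C/c\simeq[w]_{A_p}$, so you pick up an extra full power of $[w]_{A_p}$; the paper circumvents this only by first freezing $A_p(w,Q)$ into $\simeq\log[w]_{A_p}$ layers, which is precisely what costs the $\Phi([w]_{A_p})$ factor in Theorems~\ref{thm:testexp0} and \ref{thm:testW0} and cannot reproduce a log-free bound. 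If instead you estimate the corona overlap $\sum_{Q:\Pi(Q)=P}\chi_Q$ on $L^p(w)$ via the sparse level sets $\Omega_k(P)$ and the sharp reverse-H\"older/$A_\infty$ decay $w(\Omega_k(P))\lesssim 2^{-ck/[w]_{A_\infty}}w(P)$, you pick up an extra factor $[w]_{A_\infty}^p$ on the ``direct'' side --- exactly the factor you claim is not reintroduced. So the inequality as written is not established, and it is unclear that it can be, with a constant depending only on $p$: a single-weight stopping time plus crude overlap control genuinely sees $w$'s $A_\infty$ character. The literature resolves this either by a \emph{parallel} stopping time with respect to both $\sigma$- and $w$-averages (so that in a common corona both densities are frozen and the overlap is paid once), or by first proving the weak-type bound $\|T\|_{L^p(\sigma)\to L^{p,\infty}(w)}\lesssim[w]_{A_p}^{1/p}[w]_{A_\infty}^{1/p'}$ by a dedicated distributional argument and then upgrading to the strong bound through the testing theorem and the symmetric weak estimate. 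Your proposal needs one of these two ingredients spelled out; as it stands the crucial inequality is a black box.
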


Meanwhile, the first author examined weighted estimates with one supremum, where the smaller part was an $A_r$ constant from \cite{Lern2011}. 
\begin{nonum}[\cite{Lern2012,Lern2011}] \label{thmpr} If $1<p,r<\infty$, $T$ is a Calder\'on-Zygmund, and $w\in A_p$ then
\begin{equation}\label{eqn:mixedpr}\|T\|_{L^p(w)}\lesssim [w]_{(A_p)^{\frac{1}{p-1}}(A_r)^{1-\frac{1}{p-1}}}+[\sigma]_{(A_{p'})^{\frac{1}{p'-1}}(A_r)^{1-\frac{1}{p'-1}}}.\end{equation}
\end{nonum}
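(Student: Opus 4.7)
The plan is to reduce the bound for $T$ to a bound for a sparse operator $A_{\mathcal{S}}f=\sum_{Q\in\mathcal{S}}\langle f\rangle_Q\chi_Q$ via Lerner's sparse-domination theorem, and then to estimate the sparse operator using bilinear duality together with a weighted Carleson embedding. Specifically, sparse domination reduces matters to bounding $\|A_{\mathcal{S}}(f\sigma)\|_{L^p(w)}$; by duality this equals $\sup_{\|g\|_{L^{p'}(w)}=1}\Lambda(f,g)$, where
$$\Lambda(f,g)=\sum_{Q\in\mathcal{S}}\langle f\sigma\rangle_Q\langle gw\rangle_Q|Q|=\sum_{Q\in\mathcal{S}}\langle f\rangle_Q^{\sigma}\langle g\rangle_Q^{w}\cdot\frac{\sigma(Q)w(Q)}{|Q|},$$
with $\langle h\rangle_Q^{\mu}=\mu(Q)^{-1}\int_Q h\,d\mu$, and by sparseness ($|Q|\leq 2|E_Q|$) one may replace $|Q|$ by $|E_Q|$ in the weight factor.

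The heart of the argument is a careful factorization of $\sigma(Q)w(Q)/|Q|$ that exposes the mixed constant. Starting from the elementary identity
$$\frac{\sigma(Q)w(Q)}{|Q|}=A_p(w,Q)^{1/p}\sigma(Q)^{1/p}w(Q)^{1/p'},$$
one needs to exchange part of the $A_p$ power for an $A_r$ power so that the overall coefficient is bounded by $[w]_{(A_p)^{1/(p-1)}(A_r)^{1-1/(p-1)}}$, while the residual factor $\sigma(Q)^{1/p}w(Q)^{1/p'}$ is replaced by its counterpart over $E_Q$ via the Hölder inequality $|E|\leq\sigma(E)^{1/p}w(E)^{1/p'}$ (obtained from $|E|=\int_E\sigma^{1/p}\sigma^{-1/p}$ since $\sigma^{-p'/p}=w$). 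Applying Hölder in $Q$ with exponents $(p,p')$ and then the sparse Carleson embedding theorem against the measures $\sigma$ and $w$ respectively, one arrives at
$$\Lambda(f,g)\lesssim[w]_{(A_p)^{1/(p-1)}(A_r)^{1-1/(p-1)}}\,\|f\|_{L^p(\sigma)}\|g\|_{L^{p'}(w)},$$
which yields the first summand in \eqref{eqn:mixedpr}. The second summand, involving $[\sigma]_{(A_{p'})^{1/(p'-1)}(A_r)^{1-1/(p'-1)}}$, follows by running the same argument for the adjoint operator $T^{*}$ on $L^{p'}(\sigma)$, with the roles of $(w,p,f)$ and $(\sigma,p',g)$ exchanged. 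The `$+$' simply records that either dual bound may be invoked.

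The main obstacle will be the factorization step: there is no canonical way to split $A_p(w,Q)^{1/p}$ into an $A_p$-$A_r$ mixed piece plus a Carleson-summable residue. One must simultaneously balance (i) the Hölder exponents $p$ and $p'$, (ii) the mixed-constant exponents $1/(p-1)$ and $1-1/(p-1)$, and (iii) the sparse-Carleson condition $\sum_{Q'\subset Q}(\text{residue}_{Q'})\lesssim\mu(Q)$ for $\mu\in\{\sigma,w\}$. Unlike the $A_\infty$ constant, which can absorb the ratio $\sigma(Q)/\sigma(E_Q)$ through a reverse-Hölder inequality, the $A_r$ constant requires a more delicate Jensen-type comparison between $A_p(w,Q)$ and $A_r(w,Q)$. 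This comparison needs to be handled case-by-case depending on the relative sizes of $p$, $r$, and $2$, which explains the asymmetric and somewhat unusual exponents $1/(p-1)$ and $1-1/(p-1)$ appearing in \eqref{eqn:mixedpr}.
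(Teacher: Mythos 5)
This is a cited result (from \cite{Lern2011,Lern2012}); the paper records it without proof, so there is no in-paper argument to compare against. Evaluating your proposal on its own terms: the high-level framework (sparse domination, bilinear form $\Lambda(f,g)$, Carleson embedding) is the right family of tools, and indeed the existing proofs of such bounds proceed through sparse operators and two-weight testing (cf.\ the Lacey--Sawyer--Uriarte-Tuero theorem recalled in Section~\ref{prelim}). However, your proposal has a genuine gap that you yourself flag: the $A_r$ constant never actually enters the computation. The identity $\frac{\sigma(Q)w(Q)}{|Q|}=A_p(w,Q)^{1/p}\sigma(Q)^{1/p}w(Q)^{1/p'}$ is correct, but it contains no $A_r$ quantity, and the asserted display
$$\Lambda(f,g)\lesssim[w]_{(A_p)^{1/(p-1)}(A_r)^{1-1/(p-1)}}\|f\|_{L^p(\sigma)}\|g\|_{L^{p'}(w)}$$
is exactly the claim to be proved, not a consequence of what precedes it. Writing ``there is no canonical way to split $A_p(w,Q)^{1/p}$ into an $A_p$--$A_r$ mixed piece plus a Carleson-summable residue'' is a frank admission that the key step is missing; a proof must exhibit such a split, together with the Carleson packing condition for the residue against both $\sigma$ and $w$, and this is precisely where the $r$-dependence (and the factor $2^r$ mentioned in the Introduction, which is the obstruction to sending $r\to\infty$) enters.

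Two smaller points. First, the inequality you invoke should read $|E|\leq w(E)^{1/p}\sigma(E)^{1/p'}$, obtained from $|E|=\int_E w^{1/p}w^{-1/p}$ and $w^{-p'/p}=\sigma$ (equivalently $\int_E\sigma^{1/p'}\sigma^{-1/p'}$ with $\sigma^{-p/p'}=w$); the version you wrote, $|E|\leq\sigma(E)^{1/p}w(E)^{1/p'}$, with justification $\sigma^{-p'/p}=w$, only holds at $p=2$ since $\sigma^{-p'/p}=w^{1/(p-1)^2}$. Second, the `$+$' in \eqref{eqn:mixedpr} is not a matter of ``either dual bound may be invoked''; the sparse operator norm $\|T^{\Sp}(\cdot\,\sigma)\|_{L^p(\sigma)\to L^p(w)}$ is comparable to the \emph{sum} of the two testing constants $[w,\sigma]_{T^\Sp_p}+[\sigma,w]_{T^\Sp_{p'}}$, and for finite $r$ these two testing constants are governed by genuinely different mixed quantities, so both terms are needed. (This contrasts with $r=\infty$ in the exponential scale, where, as noted after Theorem~\ref{thm:exp1}, the two constants coincide.)
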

Notice that the right hand side of \eqref{A2} can be written as
$$[w]_{A_p}^{\max(1,\frac{1}{p-1})}\simeq [w]_{A_p}+[\sigma]_{A_{p'}}.$$
With this in mind it is easy to see that inequalities \eqref{eqn:LHPmixed} and \eqref{eqn:mixedpr} both improve \eqref{A2}, while explicit examples show that right hand sides of \eqref{eqn:LHPmixed} and \eqref{eqn:mixedpr} are incomparable.  We emphasize that the bounds \eqref{eqn:LHPmixed} and \eqref{eqn:mixedpr} differ twofold: the latter has one supremum constants and the smaller part of the mixed constant is an $A_r$ measurement for $1<r<\infty$.  Explicit examination of the proof of \eqref{eqn:mixedpr} in \cite{Lern2011} shows that one cannot take $r=\infty$ because of a factor of $2^r$ involved in the calculations.  The first author went on to ask if it was possible to take $r=\infty$ in inequality \eqref{eqn:mixedpr}.  Our first result for Calder\'on-Zygmund operators is a positive answer to this question.

\begin{theorem} \label{thm:exp1} If $1<p<\infty$ and $T$ is a Calder\'on-Zygmund operator then
\begin{equation}\label{eqn:exp1} \|T\|_{L^{p}(w)}\lesssim [w]_{(A_p)^{\frac{1}{p-1}}(A_\infty^{\exp})^{1-\frac{1}{p-1}}}.\end{equation}
\end{theorem}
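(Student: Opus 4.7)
The approach begins with the well-known pointwise sparse domination of Calder\'on--Zygmund operators (Lerner, Conde-Alonso--Rey, Lacey): there exist finitely many sparse families $\mathcal{S}_i$ of dyadic cubes such that $|Tf|$ is pointwise dominated by $\sum_i \mathcal{A}_{\mathcal{S}_i} f$, where $\mathcal{A}_\mathcal{S} f = \sum_{Q\in\mathcal{S}}\langle|f|\rangle_Q \chi_Q$. Substituting $f\mapsto f\sigma$, it suffices to show the sparse bound
\[\Bigl\|\sum_{Q\in\mathcal{S}}\tfrac{\sigma(Q)}{|Q|}\langle f\rangle_Q^\sigma\chi_Q\Bigr\|_{L^p(w)}\lesssim K\|f\|_{L^p(\sigma)}\]
with $K=[w]_{(A_p)^{1/(p-1)}(A_\infty^{\exp})^{1-1/(p-1)}}$. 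The motivating observation is the pointwise identity
\[A_p(w,Q) \;=\; A_\infty^{\exp}(w,Q)\,A_\infty^{\exp}(\sigma,Q)^{p-1},\]
which follows directly from $\sigma=w^{-1/(p-1)}$ and the definition of $A_\infty^{\exp}$. Substituting into the mixed constant yields
\[A_p(w,Q)^{\frac{1}{p-1}}A_\infty^{\exp}(w,Q)^{1-\frac{1}{p-1}}\;=\;A_\infty^{\exp}(w,Q)\,A_\infty^{\exp}(\sigma,Q),\]
so that \eqref{eqn:exp1} is equivalent to the clean statement $\|T\|_{L^p(w)}\lesssim\sup_Q A_\infty^{\exp}(w,Q)A_\infty^{\exp}(\sigma,Q)$.

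The next step is to establish a \emph{localized} variant of \eqref{eqn:LHPmixed} for the sparse operator, of the form
\[\|\mathcal{A}_\mathcal{S}\|_{L^p(\sigma)\to L^p(w)}\lesssim \sup_Q A_p(w,Q)^{\frac1p}\bigl(A_\infty(w,Q)^{\frac{1}{p'}}+A_\infty(\sigma,Q)^{\frac1p}\bigr),\]
in which the Fujii--Wilson $A_\infty$ constants are evaluated cube-by-cube. This is obtained by revisiting the parallel corona argument of \cite{HL2011, HP2011, L2011} and verifying that each step uses only the $A_\infty$ constant at the relevant stopping cube. The Hyt\"onen--P\'erez local integral inequality $\int_Q M(v\chi_Q)\leq cA_\infty^{\exp}(v,Q)v(Q)$ (applied with $v=w$ and $v=\sigma$) then upgrades each local Fujii--Wilson factor to its exponential counterpart at the same cube. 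A final application of the identity transforms $A_p(w,Q)^{1/p}A_\infty^{\exp}(w,Q)^{1/p'}$ into $A_\infty^{\exp}(w,Q)A_\infty^{\exp}(\sigma,Q)^{1/p'}$ and $A_p(w,Q)^{1/p}A_\infty^{\exp}(\sigma,Q)^{1/p}$ into $A_\infty^{\exp}(w,Q)^{1/p}A_\infty^{\exp}(\sigma,Q)$; both are dominated by $A_\infty^{\exp}(w,Q)A_\infty^{\exp}(\sigma,Q)$ since $A_\infty^{\exp}(\cdot,Q)\geq 1$.

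The main obstacle is the localization step itself. In the classical HLP proof, after the simultaneous stopping-time decomposition for $f$ (with $\sigma$-averages) and the dual test function $g$ (with $w$-averages), the final estimate integrates a Hardy--Littlewood maximal function against a weight over an entire Whitney-type region, which naturally produces a \emph{global} $A_\infty$ factor. The plan is to perform this integration \emph{cube-by-cube} over the principal family, invoking only the local Hyt\"onen--P\'erez estimate on each stopping cube. Upon reassembly, the resulting local $A_\infty^{\exp}$ factors appear at \emph{matching} cubes for $w$ and $\sigma$---precisely the shape required by the one-supremum quantity in \eqref{eqn:exp1}---rather than collapsing into a product of two independent suprema.
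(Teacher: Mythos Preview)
Your reduction via the identity
\[
A_p(w,Q)^{\frac{1}{p-1}}A_\infty^{\exp}(w,Q)^{1-\frac{1}{p-1}}=A_\infty^{\exp}(w,Q)\,A_\infty^{\exp}(\sigma,Q)
\]
is correct and illuminating, but the proposed intermediate step is a genuine gap. The ``localized'' bound
\[
\|\mathcal{A}_\mathcal{S}\|_{L^p(\sigma)\to L^p(w)}\lesssim \sup_Q A_p(w,Q)^{\frac1p}\bigl(A_\infty(w,Q)^{\frac{1}{p'}}+A_\infty(\sigma,Q)^{\frac1p}\bigr)
\]
is, after unwinding, exactly Conjecture~\ref{conj:ult1} (note $A_{p'}(\sigma,Q)^{1/p'}=A_p(w,Q)^{1/p}$), which the paper leaves open; even its $A_\infty^{\exp}$ analogue is only established here \emph{with} a logarithmic factor (Theorem~\ref{thm:exp0}, and Question~5.3 for $\alpha=0$). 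Your plan to ``revisit the parallel corona argument'' and obtain $A_\infty^{\exp}(w,\cdot)$ and $A_\infty^{\exp}(\sigma,\cdot)$ at \emph{matching} stopping cubes is precisely the obstruction: the corona families for $w$ and for $\sigma$ are constructed by different stopping rules, and when one freezes $A_p(w,Q)\simeq 2^a$ to force a common scale (as in the proof of Theorem~\ref{thm:testexp0}), the sum over the $\simeq\log[w]_{A_p}$ levels produces the unavoidable $\Phi([w]_{A_p})$ factor. No mechanism in your sketch removes it. In short, you are proposing to deduce Theorem~\ref{thm:exp1} from a strictly stronger (and open) statement.

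The paper's proof takes an entirely different route that sidesteps the corona machinery. It works directly with the testing constant $[\sigma,w]_{T^\Sp_{p'}}$: after dividing through by the mixed constant, the task reduces to a bilinear estimate that is handled by H\"older's inequality and two applications of the \emph{geometric} maximal operator $M_0$, which is bounded on $L^1$. The key algebraic point is
\[
\Bigl(\exp\dashint_Q\log w\Bigr)^{1-p'}=\exp\dashint_Q\log\sigma,
\]
so the geometric means of $w$ and $\sigma$ match at \emph{every} cube automatically---there is no need to align two stopping families. This is exactly what makes the $A_\infty^{\exp}$ endpoint accessible while the Fujii--Wilson version remains conjectural.
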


The astute reader will notice that taking $r=\infty$ in inequality \eqref{eqn:mixedpr} should yield the sum of two constants, whereas inequality \eqref{eqn:exp1} only has one constant.  However they are equivalent since for these particular exponents,
$$[w]_{(A_p)^{\frac{1}{p-1}}(A^{\exp}_\infty)^{1-\frac{1}{p-1}}}=[\sigma]_{(A_{p'})^{\frac{1}{p'-1}}(A^{\exp}_\infty)^{1-\frac{1}{p'-1}}}.$$

Hyt\"onen and Lacey \cite{HL2011} went on to conjecture that one should be able to replace the right hand side of \eqref{eqn:LHPmixed} with an estimate containing one supremum constants.
\begin{conjecture}[\cite{HL2011}] \label{conj:ult1} If $1<p<\infty$, $T$ is a Calder\'on-Zygmund operator, and $w\in A_p$ then
\begin{equation}\label{eqn:W0} \|T\|_{L^p(w)}\lesssim [w]_{(A_p)^{\frac{1}{p}}(A_\infty)^{\frac{1}{p'}}}+[\sigma]_{(A_{p'})^{\frac{1}{p'}}(A_\infty)^{\frac{1}{p}}}.\end{equation}
\end{conjecture}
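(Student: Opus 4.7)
My plan is to combine sparse domination with a careful symmetric stopping-time analysis of the resulting bilinear form. By the Hytönen--Lacey style sparse domination (cf.\ \cite{HL2011}), one has a pointwise bound $|Tf(x)|\lesssim \sum_k \mathcal{A}_{\mathcal{S}_k}f(x)$, where $\mathcal{A}_{\mathcal{S}}f=\sum_{Q\in\mathcal{S}}\bigl(\dashint_Q f\bigr)\chi_Q$ and each $\mathcal{S}_k$ is sparse. Hence it suffices to show, uniformly over a sparse family $\mathcal{S}$, that
$$\|\mathcal{A}_{\mathcal{S}}\|_{L^p(w)\to L^p(w)}\lesssim [w]_{(A_p)^{1/p}(A_\infty)^{1/p'}}+[\sigma]_{(A_{p'})^{1/p'}(A_\infty)^{1/p}}.$$

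To set up the analysis I dualize via $f=h\sigma$ with $h\in L^p(\sigma)$ and test against $g\in L^{p'}(w)$. Using the algebraic identity $\sigma(Q)w(Q)/|Q|=A_p(w,Q)^{1/p}\sigma(Q)^{1/p}w(Q)^{1/p'}$, the relevant bilinear form equals
$$\sum_{Q\in\mathcal{S}}\Bigl(\tfrac{1}{\sigma(Q)}\!\int_Q\! h\sigma\Bigr)\Bigl(\tfrac{1}{w(Q)}\!\int_Q\! gw\Bigr) A_p(w,Q)^{\frac{1}{p}}\sigma(Q)^{\frac{1}{p}}w(Q)^{\frac{1}{p'}}.$$
I then run the parallel principal-cube decompositions $\mathcal{F}_h,\mathcal{F}_g\subset\mathcal{S}$, stopping when the $\sigma$-average of $h$ (respectively the $w$-average of $g$) doubles, and denote the minimal stopping ancestor by $\pi_h(Q),\pi_g(Q)$. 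Grouping cubes by $(\pi_h(Q),\pi_g(Q))=(F,F')$ and using the doubling control of the averages reduces matters to the \emph{local} estimate
$$\sum_{\pi_h(Q)=F,\,\pi_g(Q)=F'}A_p(w,Q)^{\frac{1}{p}}\sigma(Q)^{\frac{1}{p}}w(Q)^{\frac{1}{p'}}\lesssim \mathcal{K}\,\sigma(F\cap F')^{\frac{1}{p}}w(F\cap F')^{\frac{1}{p'}},$$
where $\mathcal{K}$ is the right-hand side of the conjecture. Once this local bound is in place, two applications of Hölder and the Carleson property of principal cubes (which is free) conclude.

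The crux, and the principal obstacle, is the local bound just displayed. Using the Fujii--Wilson definition of $A_\infty$ together with the sparseness of $\mathcal{S}$ (disjoint $E_Q\subset Q$, $|E_Q|\ge\tfrac12|Q|$), one obtains $w(Q)\lesssim A_\infty(w,Q)w(E_Q)$, which supplies a one-supremum--compatible factor $A_\infty(w,Q)^{1/p'}$ and telescopes the $w$-part of the sum to $w(F\cap F')$. The genuine difficulty is the leftover $\sigma(Q)^{1/p}$: invoking the symmetric estimate $\sigma(Q)\lesssim A_\infty(\sigma,Q)\sigma(E_Q)$ produces a \emph{second} supremum and only recovers the two-suprema bound \eqref{eqn:LHPmixed}. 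To fuse the two pieces into the single conjectured constant one would need a bilinear Carleson embedding that controls $\sum \sigma(Q)^{1/p}w(Q)^{1/p'}$ jointly, without splitting it via Hölder's inequality in $Q$. I expect this fusion step is precisely where an unavoidable logarithmic loss $\Phi(\cdot)$ enters with current methods, and that removing it requires a new combinatorial input on sparse families--this is, in my view, the actual content of Conjecture \ref{conj:ult1}.
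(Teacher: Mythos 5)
The statement you were asked to prove is labeled a \emph{conjecture} in the paper (due to Hyt\"onen and Lacey), and the paper does not prove it; Theorems \ref{thm:exp0} and \ref{thm:W0} give only partial answers carrying an extra logarithmic factor $\Phi(\cdot)$. Your proposal correctly recognizes this and stops short of claiming a proof, and the obstruction you isolate---that applying the Fujii--Wilson inequality symmetrically to the leftover $\sigma(Q)^{1/p}$ produces a second supremum, whereas fusing the two pieces into one supremum constant cannot be done by H\"older in $Q$ alone---is an accurate diagnosis. On the central question you are right: no proof is supplied, by you or by the paper.

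Still, the route the paper takes to its partial bound is genuinely different from what you sketch. You propose dualizing against $g\in L^{p'}(w)$ and running parallel principal-cube decompositions in $h$ and $g$ in the Hyt\"onen--Lacey style, organizing the bilinear form by the pair $(\pi_h(Q),\pi_g(Q))$. The paper instead reduces immediately to the Lacey--Sawyer--Uriarte-Tuero testing constant $[\sigma,w]_{T^\Sp_{p'}}$ (so no pair of test functions survives), partitions the sparse family into $O(\log[w]_{A_p})$ levels $\Q^a$ on which $A_p(w,Q)\simeq 2^a$ is essentially constant, runs a single corona decomposition (Lemma \ref{lem:corona}) on each level with stopping data $a_Q=\dashint_Q w$ and measure $\sigma$, and then sums over the levels by H\"older. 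The logarithm $\Phi([w]_{A_p})$ is exactly the count of levels. This freezing-plus-single-corona scheme is more economical than a doubly-indexed stopping time and makes the origin of the log loss transparent; both strategies stall at the same place, and your assessment that removing the loss requires genuinely new input matches the authors' own (see Questions \ref{ques:expal} and \ref{ques:Wal}).
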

We are able to give a partial answer to Conjecture \ref{conj:ult1} and the corresponding version containing the exponential $A_\infty$.  However, our estimates contain an extra logarithmic factor.  In this vein, our first result is estimate containing the exponential mixed constants.  
\begin{theorem} \label{thm:exp0} Under the same hypothesis as Theorem \ref{thm:exp1} we have 
\begin{equation*} \label{eqn:weakexp0} \|T\|_{L^{p,\infty}(w)}\lesssim \Phi([w]_{A_p})^{\frac1p}[w]_{(A_p)^{\frac1p}(A^{\exp}_\infty)^{\frac{1}{p'}}}. \end{equation*}
and
\begin{equation*} \label{eqn:weakexp0} \|T\|_{L^{p}(w)}\lesssim \Phi([w]_{A_p})^{\frac1p}[w]_{(A_p)^{\frac1p}(A^{\exp}_\infty)^{\frac{1}{p'}}}+\Phi([\sigma]_{A_{p'}})^{\frac{1}{p'}}[\sigma]_{(A_p)^{\frac{1}{p'}}(A^{\exp}_\infty)^{\frac{1}{p}}}. \end{equation*}
\end{theorem}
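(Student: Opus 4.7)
The plan is to prove the weak-type inequality first and then deduce the strong-type estimate by duality. For the weak-type bound the starting point is the pointwise sparse domination $|Tf|\lesssim \mathcal A_{\mathcal S}f$ of any Calder\'on--Zygmund operator by a sparse operator. After replacing $f$ by $f\sigma$ and testing against $g=\chi_E/w(E)^{1/p'}$ with $E\subset\{\mathcal A_{\mathcal S}(f\sigma)>\lambda\}$, the weak-type $L^{p,\infty}(w)$ bound reduces to controlling the sparse bilinear form $\sum_{Q\in\mathcal S}\bigl(\dashint_Q f\sigma\bigr)\bigl(\dashint_Q g w\bigr)|Q|$ in terms of $\|f\|_{L^p(\sigma)}w(E)^{1/p'}$, exactly as in the proof of Theorem~\ref{thm:exp1}.

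The logarithmic factor $\Phi([w]_{A_p})^{1/p}$ is introduced by layering an $A_p$-doubling stopping time on top of $\mathcal S$. Concretely, partition $\mathcal S$ into generations $\mathcal S_0,\dots,\mathcal S_{N-1}$ with $N\simeq\Phi([w]_{A_p})$, where $Q\in\mathcal S$ belongs to generation $k$ if $A_p(w,Q)\in[2^k,2^{k+1})$ and no strict $\mathcal S$-ancestor of $Q$ already belongs to $\mathcal S_k$; the generation count is bounded by $\Phi([w]_{A_p})$ because $1\le A_p(w,Q)\le[w]_{A_p}$. On each generation $A_p(w,Q)$ is essentially frozen, so the typical summand can be rewritten to extract the product $A_p(w,Q)^{1/p}A_\infty^{\exp}(w,Q)^{1/p'}$, which is dominated by the one-supremum mixed constant $[w]_{(A_p)^{1/p}(A_\infty^{\exp})^{1/p'}}$. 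The residual sparse sum within each generation is then closed by standard $A_\infty^{\exp}$ arguments: Jensen's inequality $\exp\langle\log\sigma\rangle_Q\le\langle\sigma\rangle_Q$ combined with the Carleson embedding property for the principal-cube family. Finally, summing over the $N$ generations by H\"older's inequality in $\ell^p$ contributes the outer factor $N^{1/p}\simeq\Phi([w]_{A_p})^{1/p}$, yielding the stated weak-type bound.

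For the strong-type estimate, I would apply the weak-type inequality just obtained to both $T$ on $L^p(w)$ and to its adjoint $T^*$ (also a Calder\'on--Zygmund operator) on $L^{p'}(\sigma)$; by the duality $\sigma^{1-p'}=w$ the constant produced for $T^*$ is precisely $\Phi([\sigma]_{A_{p'}})^{1/p'}[\sigma]_{(A_{p'})^{1/p'}(A_\infty^{\exp})^{1/p}}$, the second term on the right-hand side. Combining via the standard interpolation-type argument for symmetric sparse bilinear forms gives
\[
\|T\|_{L^p(w)}\lesssim \|T\|_{L^{p,\infty}(w)}+\|T^*\|_{L^{p',\infty}(\sigma)},
\]
and hence the advertised sum of two mixed constants.

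The main obstacle I anticipate is calibrating the $A_p$-stopping time so that (a) the generation count enters the final bound only as $\Phi([w]_{A_p})^{1/p}$ and not as a worse power, and (b) the residual sparse sum on each generation is genuinely dominated by the one-supremum constant without leaking an extra factor of $[w]_{A_\infty^{\exp}}$ (which would be exponentially weaker than the claimed bound). The delicate point is to invoke the Jensen-type inequality $\exp\langle\log\sigma\rangle_Q\le\langle\sigma\rangle_Q$ only where unavoidable, so that the exponential-logarithmic average survives inside the $A_\infty^{\exp}$ part of the mixed constant rather than being crudely replaced by the Fujii--Wilson average.
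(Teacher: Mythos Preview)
Your high-level strategy matches the paper's: reduce to sparse operators, slice the sparse family into $\simeq\Phi([w]_{A_p})$ layers according to the value of $A_p(w,Q)$, extract the one-supremum mixed constant on each layer, and close with the $L^1$-boundedness of the geometric maximal operator $M_0$. The logarithmic factor and its exponent $1/p$ arise for the same reason in both arguments (H\"older in $\ell^p$ over the layers).

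There is, however, a genuine gap. You never explain how to pass from the $L^{p'}(\sigma)$ norm of the sparse sum $\sum_{Q\in\Q^a}\langle w\rangle_Q\chi_Q$ (or, in your bilinear formulation, the sum $\sum_Q\langle f\sigma\rangle_Q\langle gw\rangle_Q|Q|$) to a discrete $\ell^{p'}$-type sum where the mixed constant can be pulled out term by term. The paper accomplishes this with a \emph{corona decomposition} (Lemma~\ref{lem:corona}): once $A_p(w,Q)$ is frozen on $\Q^a$, the sequence $a_Q=\langle w\rangle_Q$ satisfies the balance condition $(a_Q)^{p-1}\sigma(Q)/|Q|\simeq 2^a$, and the lemma then gives
\[
\Big\|\sum_{Q\in\Q^a}\langle w\rangle_Q\chi_Q\Big\|_{L^{p'}(\sigma)}\lesssim\Big(\sum_{Q\in\Ca^a}\langle w\rangle_Q^{p'}\sigma(Q)\Big)^{1/p'}
\]
for a subfamily $\Ca^a\subset\Q^a$. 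Only at this point can one write $\langle w\rangle_Q^{p'}\sigma(Q)\le[w]_{(A_p)^{1/p}(A_\infty^{\exp})^{1/p'}}^{p'}\exp(\langle\log w\rangle_Q)|Q|$ and sum via $M_0$. Your references to ``Carleson embedding for the principal-cube family'' and the muddled description of generations (requiring both $A_p(w,Q)\in[2^k,2^{k+1})$ \emph{and} maximality) do not substitute for this step; the freezing of $A_p(w,Q)$ is exactly what makes the corona lemma applicable, and without it the $p'$-th power cannot be linearized. Note also that the proof of Theorem~\ref{thm:exp1} you invoke as a template avoids the corona lemma only because the exponent $1/(p-1)$ on $A_p$ there allows a direct algebraic cancellation unavailable when the exponent is $1/p$.

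A second, smaller issue: the inequality $\|T\|_{L^p(w)}\lesssim\|T\|_{L^{p,\infty}(w)}+\|T^*\|_{L^{p',\infty}(\sigma)}$ is not an ``interpolation-type argument.'' For sparse operators it is a consequence of the two-weight testing characterization of Lacey--Sawyer--Uriarte-Tuero (Theorem~\ref{testing} in the paper), which identifies each weak norm with a testing constant and the strong norm with their sum. The paper uses this theorem explicitly; you should too, rather than appeal to interpolation, which does not yield such a statement in general.
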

 
 For the mixed $A_\infty$ constants we obtain a slightly worse power on the logarithmic factor.  
\begin{theorem} \label{thm:W0} Under the same hypothesis as Theorem \ref{thm:exp1} we have 
\begin{equation*} \|T\|_{L^{p,\infty}(w)}\lesssim \Phi([w]_{A_p})[w]_{(A_p)^{\frac1p}(A_\infty)^{\frac{1}{p'}}}.\end{equation*}
 and
 \begin{equation*} \|T\|_{L^{p}(w)}\lesssim \Phi([w]_{A_p})([w]_{(A_p)^{\frac1p}(A_\infty)^{\frac{1}{p'}}}+ [\sigma]_{(A_{p'})^{\frac{1}{p'}}(A_\infty)^{\frac{1}{p}}}).\end{equation*}
 \end{theorem}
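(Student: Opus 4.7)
The plan is to reduce to sparse operators and then decompose the sparse family by dyadic levels of $A_p(w,Q)$.

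By now-standard sparse domination for Calder\'on--Zygmund operators, it suffices to establish both bounds with $T$ replaced by a sparse operator
\[
\mathcal{A}_\sigma^\mathcal{S} f = \sum_{Q \in \mathcal{S}} \langle f \rangle_Q^\sigma \chi_Q
\]
uniformly in the sparse family $\mathcal{S}$. The key building block is a local version of the weak-$(p,p)$ form of (\ref{eqn:LHPmixed}): tracing the Hyt\"onen--Lacey argument, in which the $A_\infty$ constant enters only through the sparse Carleson embedding $\sum_{Q' \subset Q,\, Q' \in \mathcal{S}} w(Q') \lesssim A_\infty(w,Q)\,w(Q)$ (and the symmetric bound for $\sigma$), one obtains, for every subfamily $\mathcal{S}' \subset \mathcal{S}$,
\[
\|\mathcal{A}_\sigma^{\mathcal{S}'} f\|_{L^{p,\infty}(w)} \lesssim \Bigl(\sup_{Q \in \mathcal{S}'} A_p(w,Q)\Bigr)^{1/p}\Bigl(\sup_{Q \in \mathcal{S}'} A_\infty(w,Q)\Bigr)^{1/p'} \|f\|_{L^p(\sigma)}.
\]

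The main step is to split $\mathcal{S}$ according to the size of $A_p(w,\cdot)$: set
\[
\mathcal{S}_k = \{Q \in \mathcal{S} : 2^{-k-1}[w]_{A_p} < A_p(w,Q) \leq 2^{-k}[w]_{A_p}\}, \qquad k = 0, 1, 2, \ldots.
\]
Since $A_p(w,Q) \geq 1$ for every cube $Q$, only $N = O(\Phi([w]_{A_p}))$ of the $\mathcal{S}_k$ are nonempty. The defining property of the one-supremum constant gives, for every $Q \in \mathcal{S}_k$,
\[
A_\infty(w,Q)^{1/p'} \leq \frac{[w]_{(A_p)^{1/p}(A_\infty)^{1/p'}}}{A_p(w,Q)^{1/p}} \lesssim (2^{-k}[w]_{A_p})^{-1/p}\,[w]_{(A_p)^{1/p}(A_\infty)^{1/p'}}.
\]
Inserting this together with $\sup_{Q \in \mathcal{S}_k} A_p(w,Q) \leq 2^{-k}[w]_{A_p}$ into the localized estimate above, the $A_p$ factors cancel and we obtain, uniformly in $k$,
\[
\|\mathcal{A}_\sigma^{\mathcal{S}_k} f\|_{L^{p,\infty}(w)} \lesssim [w]_{(A_p)^{1/p}(A_\infty)^{1/p'}} \|f\|_{L^p(\sigma)}.
\]
Since $L^{p,\infty}(w)$ is normable for $p > 1$, summing over the $N$ nonempty levels yields the desired weak-type inequality. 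The strong-type bound follows by running the identical argument on the dual sparse operator $\mathcal{A}_w^\mathcal{S} : L^{p'}(w) \to L^{p',\infty}(\sigma)$ to produce the symmetric term and invoking a standard Marcinkiewicz-type interpolation; here we use $\Phi([\sigma]_{A_{p'}}) \lesssim \Phi([w]_{A_p})$ for $p$ fixed.

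The principal obstacle is the first step: isolating a genuinely local form of the Hyt\"onen--Lacey sparse estimate whose constants involve only the values $A_p(w,Q)$ and $A_\infty(w,Q)$ attained on the chosen subfamily, not the global suprema. Once this refinement is in hand, the level decomposition and the one-supremum constant interact in an essentially algebraic way, and the logarithmic factor $\Phi([w]_{A_p})$ is the unavoidable cost of summing $\log[w]_{A_p}$ weak-type quasi-norms.
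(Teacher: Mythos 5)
Your high-level plan is exactly the one the paper uses for Theorem~\ref{thm:testW0}: split the sparse family into $O(\Phi([w]_{A_p}))$ dyadic bands on which $A_p(w,\cdot)$ is essentially constant, show that on each band the one-supremum constant gives a uniform bound, and accept the logarithm as the price of summing the bands. The two organizational differences---the paper fixes the testing cube $R$ first and level-decomposes $\mathcal{S}(R)$, and it sums the bands with the triangle inequality in $L^{p'}(\sigma)$ inside the testing integral rather than by normability of $L^{p,\infty}(w)$---are cosmetic.

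Two points in your write-up are incomplete as stated. First, you defer the ``localized Hyt\"onen--Lacey weak estimate'' and flag it as the principal obstacle, but you have phrased it in more generality than you actually need, and in that generality it does not follow from the corona machinery: with varying $A_p(w,Q)$ the corona decomposition costs an extra factor of order $[w]_{A_p}^{1/(p-1)}$ through the ratio $C/c$ in the balance condition. What saves you is that on the band $\mathcal{S}_k$ the quantity $A_p(w,Q)$ is pinned within a factor of $2$. Then Lemma~\ref{lem:corona} of the paper, applied with $\nu=\sigma$, $a_Q=\dashint_Q w$, and $r=p'/p$, has $c\le (a_Q)^r\sigma(Q)/|Q|=A_p(w,Q)^{1/(p-1)}\le C$ with $C/c$ an absolute constant. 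This reduces the testing quantity to $\sum_{Q\in\mathcal{C}^k}(\dashint_Q w)^{p'}\sigma(Q)\simeq \bigl(2^{-k}[w]_{A_p}\bigr)^{p'/p}\sum_{Q\in\mathcal{C}^k}w(Q)$; organizing by maximal corona cubes and using the sparse Carleson embedding $\sum_{P\in\mathcal{C}^k,\,P\subseteq Q}w(P)\lesssim A_\infty(w,Q)\,w(Q)$ (precisely the observation you highlight) then yields your localized bound. So the obstacle is not a genuine one, but you must invoke the roughly-constant $A_p$ to make the corona step lossless; you cannot obtain the localized estimate for an arbitrary sparse subfamily by ``tracing the argument.''

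Second, the passage from the two weak bounds to the strong bound is not a ``Marcinkiewicz-type interpolation.'' A weak $(p,p)$ bound together with a dual weak $(p',p')$ bound at the same exponent does not interpolate to strong $(p,p)$ by any Marcinkiewicz argument. What is actually used is the two-weight testing characterization of Lacey, Sawyer and Uriarte-Tuero quoted in the preliminaries: for positive sparse operators,
\begin{equation*}
\|T^{\mathcal{S}}(\,\cdot\,\sigma)\|_{L^p(\sigma)\to L^p(w)}\simeq [w,\sigma]_{T^{\mathcal{S}}_{p}}+[\sigma,w]_{T^{\mathcal{S}}_{p'}},
\end{equation*}
and your bounds on the two testing constants then give the strong estimate directly, with $\Phi([\sigma]_{A_{p'}})\lesssim_p \Phi([w]_{A_p})$ as you note.
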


\noindent Again we do not know if the logarithmic factors in Theorems \ref{thm:exp0}  or \ref{thm:W0} are necessary.

Finally we end with one last estimate that while having two suprema, is an improvement over several known results.  In \cite{LO2012JFA} the first author and Ombrosi conjecture that the following bound 
\begin{equation}\label{eqn:p<q}\|T\|_{L^p(w)}\lesssim [w]_{A_q}\end{equation}
should hold for $1<q<p<\infty$ and $w\in A_q \ (\subsetneq A_p)$.  Inequality \eqref{eqn:p<q} was proven by Duoandikoetxea in \cite{MR2754896} by means of extrapolation.  We make an observation that one may improve this bound by using the weak-type bound of Hyt\"onen and Lacey \cite{HL2011}
\begin{equation}\label{eqn:mixedweak2sup} \|T\|_{ L^{p,\infty}(w)}\lesssim [w]_{A_p}^{\frac1p}[w]_{A_\infty}^{\frac{1}{p'}}.\end{equation}
\begin{theorem} \label{thm:mixedp<q} If $1\leq q<p<\infty$, $T$ is a Calder\'on-Zygmund operator, and $w\in A_q$ then 
$$\|T\|_{L^p(w)}\lesssim [w]_{A_q}^{\frac1p} [w]_{A_\infty}^{\frac{1}{p'}}.$$
\end{theorem}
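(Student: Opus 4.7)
The plan is to combine the Hytönen--Lacey weak-type estimate \eqref{eqn:mixedweak2sup} with standard Marcinkiewicz interpolation on the weighted measure space $(\R^n, w\,dx)$. The key observation is that \eqref{eqn:mixedweak2sup} already has exactly the right shape at every exponent in $(1,\infty)$, so interpolation automatically produces the $A_q$--$A_\infty$ factorization we want. Concretely, I would invoke the monotonicity $[w]_{A_r} \le [w]_{A_q}$ for $r \ge q$ (immediate from $w \in A_q$), and then fix auxiliary exponents $q_0, q_1$ with $\max(q, 1) \le q_0 < p < q_1 < \infty$ (choosing $q_0 > 1$ when $q = 1$, to stay in the range of validity of \eqref{eqn:mixedweak2sup}). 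Applying \eqref{eqn:mixedweak2sup} at $q_0$ and $q_1$ then yields
$$\|T\|_{L^{q_i,\infty}(w)} \lesssim [w]_{A_{q_i}}^{1/q_i}[w]_{A_\infty}^{1/q_i'} \le [w]_{A_q}^{1/q_i}[w]_{A_\infty}^{1/q_i'}, \qquad i = 0, 1.$$

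Next I would apply the Marcinkiewicz interpolation theorem for sublinear operators on $L^p(w)$, with parameter $\theta \in (0,1)$ determined by $\tfrac{1}{p} = \tfrac{1-\theta}{q_0} + \tfrac{\theta}{q_1}$, to obtain
$$\|T\|_{L^p(w)} \lesssim [w]_{A_q}^{(1-\theta)/q_0 + \theta/q_1}\,[w]_{A_\infty}^{(1-\theta)/q_0' + \theta/q_1'},$$
where the implicit constant depends only on $p, q_0, q_1$. The exponent of $[w]_{A_q}$ collapses to $1/p$ by the defining relation for $\theta$, while the exponent of $[w]_{A_\infty}$ simplifies via
$$\frac{1-\theta}{q_0'} + \frac{\theta}{q_1'} = (1-\theta)\Big(1 - \tfrac{1}{q_0}\Big) + \theta\Big(1 - \tfrac{1}{q_1}\Big) = 1 - \frac{1}{p} = \frac{1}{p'},$$
yielding exactly the claimed inequality.

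There is essentially no obstacle here beyond this small arithmetic verification; the proof is one of those pleasing instances where an available weak-type inequality and the self-improvement afforded by Marcinkiewicz interpolation line up perfectly. The only mild subtlety is at the endpoint $q = 1$, where one must take $q_0 > 1$ rather than $q_0 = 1$, but this is painless because $p > 1$ leaves room and the bound $[w]_{A_{q_0}} \le [w]_{A_1}$ remains available by $A_r$ monotonicity.
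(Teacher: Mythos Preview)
Your proposal is correct and follows essentially the same approach as the paper: apply the Hyt\"onen--Lacey weak-type bound \eqref{eqn:mixedweak2sup} at two exponents bracketing $p$, use the monotonicity $[w]_{A_r}\le [w]_{A_q}$ for $r\ge q$, and conclude by Marcinkiewicz interpolation with the arithmetic check that the exponents collapse to $\tfrac1p$ and $\tfrac1{p'}$. The paper makes the specific symmetric choice $q_0=p-\epsilon=q$ and $q_1=p+\epsilon=2p-q$, whereas you allow general $q_0<p<q_1$; in particular your observation that one should take $q_0>1$ when $q=1$ (to stay in the range of \eqref{eqn:mixedweak2sup}) actually covers a case the paper's written argument, which begins ``Suppose $1<q<p<\infty$'', leaves implicit.
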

We also note that Theorem \ref{thm:mixedp<q} also improves the $A_1$ result from \cite{HP2011}
$$\|T\|_{L^p(w)}\lesssim [w]_{A_1}^{\frac1p} [w]_{A_\infty}^{\frac{1}{p'}}.$$
We believe that Theorem \ref{thm:mixedp<q} should hold for constants with one supremum, for example, by replacing $[w]_{A_q}^{\frac1p}[w]_{A_\infty}^{\frac{1}{p'}}$ with 
$$[w]_{(A_q)^{\frac1p}(A^{\exp}_\infty)^{\frac{1}{p'}}} \ \text{or}\ [w]_{(A_q)^{\frac1p}(A_\infty)^{\frac{1}{p'}}}.$$ 
Our methods do not yield this result.

The organization of the paper will be as follows.  In Section \ref{prelim} we will introduce the necessary material on Calder\'on-Zygmund operators, dyadic grids, sparse families of cubes, and testing conditions.  In Section \ref{maximal} we present the proof of Theorem \ref{thm:maxW}.  Section \ref{proofs} contains the proofs of our main results for Calder\'on-Zygmund operators, Theorems \ref{thm:exp1}, \ref{thm:exp0}, \ref{thm:W0}, and \ref{thm:mixedp<q}.  Finally we end the manuscript with some further examples, observations, and questions in Section \ref{Further}.

\section{Preliminaries}  \label{prelim}

Given a measurable set $E\subseteq \R^n$, $|E|$ will denote the Lebesgue measure of $E$.  We will simultaneously view weights as functions and measures, for example, $w(E)$ will denote the the weighted measure of $E$: $w(E)=\int_E w$.  The average of a function on a cube $Q$ will be denoted
$$\dashint_Q f=\frac{1}{|Q|}\int_Q f.$$  
Finally, we will use the notation $A\lesssim B$ to indicate that there is a constant $c$, independent of the important parameters, such that $A\leq cB$.  We will write $A\simeq B$ when $A\lesssim B$ and $B\lesssim A$.  All further notation will be standard or defined as needed.  
\subsection{The main operators} \label{operators} The Hardy-Littlewood maximal operator is given by
$$Mf(x)=\sup_{Q\ni x}\dashint_Q |f|.$$
We will also need the following variant, known as the geometric maximal operator,
$$M_0 f(x)=\sup_{Q\ni x}\exp\Big(\,\dashint_Q \log|f|\Big).$$
Geometric maximal operators have long been studied (see \cite{MR1628101} and the references therein).   For our purpose we will use the fact that
$$M_0:L^p(\R^n)\ra L^p(\R^n), \qquad 0<p<\infty$$
(for a proof see \cite{HP2011}).

A Calder\'on-Zygmund operator is an $L^2(\R^n)$ bounded operator associated to a kernel $K$ for functions with compact support by the equality
$$Tf(x)=\int_{\R^n}K(x,y)f(y)\,dy \qquad x\notin \text{supp} \ f,$$
where $K$ satisfies the standard size and smoothness estimates:
\begin{enumerate}[(a)]
\item $|K(x,y)|\lesssim |x-y|^{-n}$ for $x\not=y$,
\item $|K(x+h,y)-K(x,y)|+|K(x,y+h)-K(x,y)|\lesssim {|h|^\delta}{|x-y|^{-n-\delta}}$ for some $\delta\in (0,1]$ when $|x-y|\geq 2|h|$.
\end{enumerate}
One may also define the maximally truncated version:
$$T_\star f(x)=\sup_{\ep>0}\Big|\int_{|x-y|>\ep}K(x,y)f(y)\,dy\Big|.$$
We will use common notation for the operator norms:
$$\|S\|_{L^p(w)}=\sup_{\|f\|_{L^p(w)}=1}\|Sf\|_{L^p(w)}$$
and
$$\|S\|_{L^{p,\infty}(w)}=\sup_{\|f\|_{L^p(w)}=1}\|Sf\|_{L^{p,\infty}(w)}$$
where $S$ is a sub-linear operator.  Occasionally we will wish to explicitly state that the operator acts  between two different function spaces, in which case we will write
$$\|S\|_{X\ra Y}=\sup_{\|f\|_{X}=1}\|Sf\|_{Y}.$$
For example
$$\|S\|_{L^{p}(\sigma)\ra L^p(w)}=\sup_{\|f\|_{L^p(\sigma)}=1}\|Sf\|_{L^{p}(w)}.$$
\subsection{Dyadic grids and sparse families} \label{dyadiccubes}
A dyadic grid, usually denoted $\D$, is a collection of cubes in $\R^n$ with the following properties:
\begin{enumerate}[\indent (a)]
\item the side-length of each cube satisfies $\ell(Q)=2^k$ for some $k\in \Z$;
\item given $Q,P\in \D$, $Q\cap P\in \{P, Q, \varnothing\}$;
\item for a fixed $k\in \Z$ the set $\D_k=\{Q\in \D:\ell(Q)=2^k\}$ forms a partition of $\R^n$.
\end{enumerate}

Given a dyadic grid, $\D$, we define the dyadic maximal operator as
$$M^\D f(x) =\sup_{\substack{Q\in \D \\ Q\ni x}}\dashint_Q |f(y)|\,dy.$$
It is well known (see \cite{HP2011,Lern2012}) that the Hardy-Littlewood maximal operator is pointwise equivalent to the finite sum of dyadic maximal functions.  Specifically, there exists dyadic grids, $\D^1,\ldots, \D^N$ and a dimensional constant $c_n$ such that 
\begin{equation}\label{eqn:dyadmaxbd} Mf(x)\leq c_n \sum_{k=1}^N M^{\D^k}f(x),\end{equation}
(the opposite inequality is trivial).  Thus, when obtaining bounds for $M$ it suffices to work with $M^\D$ for general dyadic grid $\D$.  Moreover, it is often useful to change the measure from Lebsegue measure to a weighted measure.  Specifically, given a weight $w$ and a dyadic grid $\D$, define the dyadic maximal function with respect to $w$ by
$$M^\D_w f(x)=\sup_{\substack{Q\in \D \\ x\in Q}}\frac{1}{w(Q)}\int_Q |f|w.$$ 
 The maximal operator $M_w^\D$ satisfies the $L^p(w)$ bounds 
$$\|M_w^\D\|_{L^p(w)}\leq p'$$
(see \cite{Moen2012} for a proof).

Let $\D$ be a dyadic grid, $\Q\subset \D$, and for each $Q\in \D$ define
$$\Q(Q)=\{Q'\in \Q: Q'\subseteq Q\}, \ \text{and}\ \Q'(Q)=\{Q'\in \Q: Q'\subsetneq Q\}.$$
We say a collection $\Sp\subseteq \D$ is a sparse family or simply sparse if 
$$\Big| \bigcup_{Q'\in \Sp'(Q)} Q'\Big|\leq \frac12|Q|, \qquad Q\in \Sp.$$
For each $Q\in \Sp$ define
$$E_Q=Q\backslash \Big( \bigcup_{Q'\in \Sp'(Q)} Q'\Big).$$
Then $\{E_Q\}_{Q\in \Sp}$ is a pairwise disjoint family that satisfies: 
$$\frac12|Q|\leq |E_Q|\leq |Q|.$$
Given a sparse family $\Sp$, we define the sparse operators
$$M^\Sp f=\sum_{Q\in \Sp}\, \Big(\,\dashint_Q f\,\Big) \cdot \chi_{E_Q}$$
and
$$T^\Sp f=\sum_{Q\in \Sp}\, \Big(\,\dashint_Q f\,\Big) \cdot \chi_Q.$$
The difference between the operators $M^\Sp$ and $T^\Sp$ is that the characteristic functions in the definition of $M^\Sp$ are over the pairwise disjoint family $\{E_Q\}_{Q\in \Sp}$.  Given a function $f$ that is bounded with compact support, by analyzing the level sets of $M^\D f$ one can prove that there exists a sparse family $\Sp=\Sp(f)$ such that
\begin{equation}\label{eqn:sparsemax} M^\D f\simeq M^\Sp f\end{equation}
where the implicit constants depend only on the dimension, not $\Sp$ or $f$.  The equivalence \eqref{eqn:sparsemax} can be traced back to Sawyer's characterization of two weight inequalities for the Hardy-Littlewood maximal operator \cite{MR676801}.  

It turns out that sparse operators dominate Calder\'on-Zygmund operators as well.  The following Theorem was proven by the first author.
\begin{nonum}[\cite{Lern2012},\cite{1Lern2012}] \label{thm:lernsparse} Suppose $T$ is a Calder\'on-Zygmund operator, $T_\star$ is the maximally truncated version and $X$ is any Banach function space (see \cite[Chapter 1]{MR928802}) then
$$\|T \|_X\leq c_T \sup_{\Sp}\|T^\Sp \|_{X}\quad
\text{and} \quad \|T_\star \|_X\leq c_T \sup_{\Sp}\|T^\Sp \|_{X}$$
where the suprema are over all sparse families of dyadic cubes.
\end{nonum}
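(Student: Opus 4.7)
The plan is to establish a pointwise sparse domination and then read off the Banach function space statement. Specifically, I aim to show that for each bounded, compactly supported $f$, there exist finitely many sparse families $\Sp_1,\ldots,\Sp_N$, drawn from the $N$ shifted dyadic grids $\D^1,\ldots,\D^N$ of the one-third trick, such that
$$|Tf(x)|\ \leq\ c_T\sum_{j=1}^{N} T^{\Sp_j}(|f|)(x)\qquad \text{a.e.,}$$
and similarly for $T_\star f$. Once this is secured, the $X$-norm estimate is immediate: by the lattice property $\||f|\|_X=\|f\|_X$, the triangle inequality gives $\|Tf\|_X \le c_T\sum_j \|T^{\Sp_j}\|_X \|f\|_X \le N\,c_T(\sup_\Sp\|T^\Sp\|_X)\|f\|_X$, and a density argument extends the result to all $f\in X$.

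The pointwise bound is obtained from the first author's local mean oscillation formula. Fix a large dyadic cube $Q_0\supset\mathrm{supp}(f)$ and work in the dyadic lattice $\D(Q_0)$ of its subcubes. For $\lambda=2^{-n-2}$ the formula yields a $\tfrac{1}{2}$-sparse family $\Sp\subset\D(Q_0)$ and a median $m_{Tf}(Q_0)$ of $Tf$ such that
$$|Tf(x)-m_{Tf}(Q_0)|\ \lesssim\ \sum_{Q\in\Sp}\omega_\lambda(Tf;Q)\chi_Q(x) \qquad \text{a.e.\ on }Q_0,$$
and the identical statement holds for $T_\star f$. Enlarging $Q_0\nearrow\R^n$ drives the median term to zero.

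The heart of the argument is the oscillation estimate
$$\omega_\lambda(Tf;Q)\ \lesssim\ c_T\dashint_{3Q}|f|,\qquad \omega_\lambda(T_\star f;Q)\ \lesssim\ c_T\dashint_{3Q}|f|.$$
To prove it, decompose $f=f\chi_{3Q}+f\chi_{(3Q)^c}$. For the local part, Kolmogorov's inequality combined with the weak $(1,1)$-bound of $T$ (and of $T_\star$, which follows from Cotlar's inequality $T_\star g \lesssim M(Tg)+c_T Mg$) gives
$$\omega_\lambda\bigl(T(f\chi_{3Q});Q\bigr)\ \lesssim\ \bigl\|T(f\chi_{3Q})\bigr\|_{L^{1,\infty}(Q,dx/|Q|)}\ \lesssim\ c_T\dashint_{3Q}|f|.$$
For the nonlocal part, the H\"older smoothness of $K$ yields $\mathrm{osc}_Q\bigl(T(f\chi_{(3Q)^c})\bigr)\lesssim \sum_{k\geq 1}2^{-k\delta}\dashint_{3^{k+1}Q}|f|$; summing the geometric series gives a constant multiple of $\dashint_{3Q}|f|$ after one more pass through the one-third trick. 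The maximal truncation $T_\star$ is treated by the same kernel bound since the supremum over $\epsilon>0$ is majorized by an expression of the same shape.

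Combining the previous two steps yields $|Tf(x)-m_{Tf}(Q_0)|\lesssim c_T\sum_{Q\in\Sp}(\dashint_{3Q}|f|)\chi_Q(x)$. The one-third trick replaces each $3Q$ by a dyadic $\widetilde Q\in\D^j$ with $|\widetilde Q|\leq c_n|Q|$; grouping cubes of $\Sp$ by the grid index of $\widetilde Q$ produces sparse subfamilies $\Sp_j\subset\D^j$ (sparsity survives up to a dimensional overlap constant), completing the pointwise bound. The principal obstacle is the oscillation estimate for $T_\star$: securing Cotlar's inequality at the present level of kernel regularity and ensuring that the nonlocal contribution is genuinely controlled by an $L^1$ average on $3Q$ rather than on a larger shell requires care; once those pieces are in place, the remainder is the standard sparse combinatorics used repeatedly in this paper.
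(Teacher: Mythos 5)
This theorem is quoted in the paper without proof (it is cited to \cite{Lern2012,1Lern2012}), so the comparison is against the argument in those references, which your outline broadly follows: local mean oscillation formula, an oscillation estimate for $Tf$ and $T_\star f$, then the one-third trick. There is, however, a genuine gap in your treatment of the nonlocal term. You claim $\omega_\lambda(Tf;Q)\lesssim c_T\dashint_{3Q}|f|$ by ``summing the geometric series,'' but the H\"older regularity of the kernel only yields
$$\mathrm{osc}_Q\bigl(T(f\chi_{(3Q)^c})\bigr)\ \lesssim\ \sum_{k\ge 1}3^{-k\delta}\,\dashint_{3^{k+1}Q}|f|,$$
and the tail averages $\dashint_{3^{k+1}Q}|f|$ are simply not dominated by $\dashint_{3Q}|f|$: taking $f$ supported far from $Q$ makes $\dashint_{3Q}|f|=0$ while every tail term is positive. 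The sum over dilations cannot be collapsed to a single local average, and consequently you do not obtain the pointwise bound $|Tf|\le c_T\sum_j T^{\Sp_j}|f|$ from this route. Indeed, Lerner's argument in \cite{Lern2012,1Lern2012} does not assert a pointwise domination at all; it produces only the Banach-function-space norm bound.

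The correct completion, following the cited papers, keeps the sum over dilations: one has $\|Tf\|_X\lesssim c_T\sum_{m\ge 0}2^{-m\delta}\bigl\|\sum_{Q\in\Sp}(\dashint_{2^mQ}|f|)\chi_Q\bigr\|_X$, and the crux is a separate lemma showing that for fixed $m$ the operator $\sum_{Q\in\Sp}(\dashint_{2^mQ}|f|)\chi_Q$ has $X$-norm at most $C_n(m+1)\sup_{\D',\Sp'}\|T^{\Sp'}|f|\|_X$. Here the one-third trick replaces each $2^mQ$ by some $\widetilde Q^{(m)}$ in a shifted grid, but the resulting family is not sparse; instead one uses that the preimages $\{Q\in\Sp:\widetilde Q^{(m)}=P\}$ overlap at any point at most $\simeq m+1$ times, which is the source of the polynomial loss, and that loss is then absorbed by $2^{-m\delta}$ when summing in $m$. (A genuine pointwise sparse domination $|Tf|\le c_T\sum_j T^{\Sp_j}|f|$ is also true, but its proof requires a different stopping-time construction, not a collapsed oscillation bound plus the local mean oscillation formula.) The Cotlar-type control of $T_\star$ that you flag is not the issue; the missing idea is the handling of the dilated cubes $2^mQ$ within the sparse framework.
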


By Theorem \ref{thm:lernsparse} we see that it suffices to work with a general dyadic grid $\D$ and sparse operator $T^\Sp$ to prove bounds for a Calder\'on-Zygmund operators.  Moreover, any bound that holds for sparse operators also holds for maximally truncated Calder\'on-Zygmund operators, thus, all of our results are valid for $T_\star$ as well.  To prove bounds for $T^\Sp$ we will use two weight testing conditions.  Given a sparse family $\Sp\subseteq \D$ and dyadic cube $R\in \D$, recall that $$\Sp(R)=\{Q\in \Sp: Q\subseteq R\}$$ and
$$T^{\Sp(R)} f=\sum_{\substack{Q\in \Sp(R)}}\Big(\,\dashint_Q f\,\Big)\cdot\chi_Q.$$
For $1<p<\infty$ and a pair of weights $(u,\sigma)$ define the testing constant
$$[w,\sigma]_{T^\Sp_{p}}=\sup_{R\in \D} \sigma(R)^{-1/p}\Big(\,\int_R (T^{\Sp(R)}\sigma)^pw\,dx\Big)^{1/p}.$$
We will also need the dual testing constant, $[\sigma,w]_{T^\Sp_{p'}}$, formed by interchanging the roles of $w$ with $\sigma$ and $p$ with $p'$.  We have the following Theorem of Lacey, Sawyer, and Uriarte-Tuero.
\begin{nonum}[\cite{LacSawUT2010}] \label{testing} Suppose $1<p<\infty$, $\D$ is a dyadic grid, $\Sp$ is a sparse subset of $\D$, and $(w,\sigma)$ is a pair of weights then the following equivalences hold
\begin{equation*} \|T^\Sp(\,\cdot\,\sigma)\|_{L^p(\sigma)\ra L^{p,\infty}(w)}\simeq [\sigma,w]_{T^\Sp_{p'}}\end{equation*}
\begin{equation*}\|T^\Sp(\,\cdot\,\sigma)\|_{L^p(\sigma)\ra L^{p
}(w)}\simeq[w,\sigma]_{T^\Sp_{p}}+ [\sigma,w]_{T^\Sp_{p'}}.\end{equation*}
\end{nonum}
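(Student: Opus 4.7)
The necessity direction in both equivalences follows immediately from testing against input functions $f = \chi_R$ for $R \in \D$: the pointwise lower bound $T^\Sp(\chi_R \sigma)(x) \geq T^{\Sp(R)} \sigma(x)$ on $R$ together with $\|\chi_R\|_{L^p(\sigma)}^p = \sigma(R)$ exhibits the testing constants as lower bounds for the respective operator norms (for the weak-type case one first dualizes the weak norm). Because $T^\Sp$ is positive and satisfies the adjoint identity $\int T^\Sp(f\sigma)\,g\,w = \int T^\Sp(gw)\,f\,\sigma$, both halves of the strong-type testing constant enter symmetrically.

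For the sufficiency direction of the strong-type inequality, the plan is to dualize and control the bilinear form
\[ \int T^\Sp(f\sigma)\,g\,w\,dx \;=\; \sum_{Q \in \Sp} \frac{\sigma(Q)\,w(Q)}{|Q|}\, \langle f\rangle_Q^\sigma\, \langle g\rangle_Q^w \]
for nonnegative $f \in L^p(\sigma)$ and $g \in L^{p'}(w)$. The key device is a pair of principal cube families $\mathcal{F} \subseteq \Sp$ and $\mathcal{G} \subseteq \Sp$ obtained by a Calder\'on-Zygmund stopping-time construction: $\mathcal{F}$ is chosen so that the $(f,\sigma)$-averages can only double at $\mathcal{F}$-cubes, and analogously $\mathcal{G}$ for $(g,w)$. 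Writing $F = \pi_{\mathcal{F}}(Q)$ and $G = \pi_{\mathcal{G}}(Q)$ for the minimal $\mathcal{F}$- and $\mathcal{G}$-ancestors of $Q \in \Sp$, one has $\langle f \rangle_Q^\sigma \lesssim \langle f\rangle_F^\sigma$, $\langle g\rangle_Q^w \lesssim \langle g \rangle_G^w$, and Carleson-type packing for $\mathcal{F}$ against $\sigma$ and $\mathcal{G}$ against $w$. Splitting $\Sp$ according as $F \subseteq G$ or $G \subsetneq F$, each piece collapses to a testing integral on a single principal cube: the first case is handled by $[w,\sigma]_{T^\Sp_{p}}$ applied on $F$ and the second by $[\sigma,w]_{T^\Sp_{p'}}$ applied on $G$. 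A H\"older inequality together with Carleson embedding over the principal cubes then reassembles $\|f\|_{L^p(\sigma)}\|g\|_{L^{p'}(w)}$.

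The weak-type direction requires only the dual testing constant. I would dualize against a test function $g\,w$ supported in $\{T^\Sp(f\sigma) > \lambda\}$, perform a Calder\'on-Zygmund selection on $f$ at height comparable to $\lambda$ with respect to $\sigma$ to produce maximal stopping cubes $\{Q_j\}$, and then split the bilinear sum into contributions from cubes $Q \in \Sp$ contained in some $Q_j$ and from cubes containing all $Q_j$ below them. The first contribution is handled directly by $[\sigma,w]_{T^\Sp_{p'}}$ applied on each $Q_j$, while the second is controlled pointwise by $\lambda$ times a Carleson packing of the $\{Q_j\}$. The main technical obstacle common to both arguments is interleaving the two stopping structures so that each $Q \in \Sp$ is charged to exactly one testing integral with the correct power of the averages, without losing a logarithm; this balance is precisely what the principal-cube construction and its associated packing estimates are designed to achieve.
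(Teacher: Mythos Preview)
The paper does not prove this statement: it is quoted as a black-box result of Lacey, Sawyer, and Uriarte-Tuero \cite{LacSawUT2010} and used only as a tool to reduce Theorems~\ref{thm:exp1}, \ref{thm:exp0}, and \ref{thm:W0} to the testing-constant estimates in Theorems~\ref{thm:testexp1}--\ref{thm:testW0}. So there is no ``paper's own proof'' to compare against.

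That said, your outline is an accurate sketch of the argument in the cited literature. The strong-type direction via parallel stopping families (principal cubes for $(f,\sigma)$ and $(g,w)$, splitting the bilinear form according to whether $\pi_{\mathcal F}(Q)\subseteq \pi_{\mathcal G}(Q)$ or not, and closing with Carleson embedding) is exactly the mechanism used in \cite{LacSawUT2010} and its later simplifications. For the weak-type equivalence, your duality remark is the cleanest route to necessity: $\|T^\Sp(\cdot\,\sigma)\|_{L^p(\sigma)\to L^{p,\infty}(w)}=\|T^\Sp(\cdot\,w)\|_{L^{p',1}(w)\to L^{p'}(\sigma)}$, and testing the right-hand operator on $\chi_R$ produces $[\sigma,w]_{T^\Sp_{p'}}$. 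Your sufficiency sketch for the weak type is the only place where some care is needed: the ``cubes above the stopping cubes'' contribution is not literally bounded by $\lambda$ times a Carleson packing of the $\{Q_j\}$---one has to organize that piece either through the Lorentz-space atomic decomposition on the dual side or through an additional layer of stopping on $g$, as in \cite{LacSawUT2010}. This is a matter of filling in a standard step rather than a genuine gap.
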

Thus to prove Theorems \ref{thm:exp1}, \ref{thm:exp0}, and \ref{thm:W0} we simply estimate the constant $[\sigma,w]_{T^\Sp_{p'}}$.  The following theorems for sparse operators imply Theorems \ref{thm:exp1}, \ref{thm:exp0}, and \ref{thm:W0}.
\begin{theorem}\label{thm:testexp1} If $1<p<\infty$, $\Sp\subseteq  \D$ is a sparse family of dyadic cubes, $w\in A_p$ with $\sigma=w^{1-p'}$, then
$$[\sigma,w]_{T^\Sp_{p'}}\lesssim [w]_{(A_p)^{\frac{1}{p-1}}(A^{\exp}_\infty)^{1-\frac{1}{p-1}}}.$$
\end{theorem}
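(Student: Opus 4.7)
The plan is to verify the testing inequality
$$\int_R (T^{\Sp(R)} w)^{p'} \sigma\, dx \lesssim K^{p'}\, w(R),$$
where $K := [w]_{(A_p)^{1/(p-1)}(A^{\exp}_\infty)^{1-1/(p-1)}}$ denotes the mixed constant, directly for each fixed $R\in\D$.

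First I would dualize in $L^{p'}(\sigma)$: writing $\|T^{\Sp(R)}w\|_{L^{p'}(\sigma)} = \sup_g \int T^{\Sp(R)}w\cdot g\,\sigma\,dx$ over nonnegative $g$ supported in $R$ with $\|g\|_{L^p(\sigma)}\le 1$, and expanding the sparse operator, the inner integral becomes
$$\sum_{Q\in\Sp(R)}\Bigl(\dashint_Q w\Bigr)\sigma(Q)\Bigl(\frac{1}{\sigma(Q)}\int_Q g\sigma\Bigr).$$

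The algebraic core is a decomposition that isolates the mixed constant. Combining the elementary identity $\dashint_Q w\cdot\dashint_Q\sigma = A_p(w,Q)^{1/(p-1)}\bigl(\dashint_Q w\bigr)^{(p-2)/(p-1)}$ with $\dashint_Q w = A^{\exp}_\infty(w,Q)\exp\dashint_Q\log w$, and setting $v := w^{(p-2)/(p-1)}$ (which pleasingly coincides with $w\sigma$), yields
$$\Bigl(\dashint_Q w\Bigr)\sigma(Q)\le K\,|Q|\exp\dashint_Q\log v.$$
Using sparseness $|Q|\le 2|E_Q|$, disjointness of $\{E_Q\}_{Q\in\Sp(R)}$, and the pointwise estimates $\exp\dashint_Q\log v\le M_0 v(x)$ and $\sigma(Q)^{-1}\int_Q g\sigma\le M^\sigma g(x)$ valid for $x\in Q$, the sum is bounded by $2K\int_R M_0 v\cdot M^\sigma g\,dx$, where $M^\sigma$ is the $\sigma$-weighted dyadic maximal function.

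Finally, I would switch to the measure $d\sigma = \sigma\,dx$ and apply H\"older's inequality against $d\sigma$ with exponents $p'$ and $p$:
$$\int_R M_0 v\cdot M^\sigma g\,dx \le \Bigl(\int_R(M_0 v)^{p'}\sigma^{1-p'}\,dx\Bigr)^{1/p'}\|M^\sigma g\|_{L^p(\sigma)}.$$
The second factor is at most $p'$ by Moen's bound. For the first, the arithmetic identity $v^{p'}\sigma^{1-p'} = w^{[p(p-2)+1]/(p-1)^2} = w$ gives $\int_R v^{p'}\sigma^{1-p'}\,dx = w(R)$, and the universal boundedness of the geometric maximal operator $M_0$ on every weighted $L^{p'}(\mu)$ (with constant depending only on $p'$), applied to $\mu = \sigma^{1-p'}\,dx$, produces $\int_R(M_0 v)^{p'}\sigma^{1-p'}\,dx\lesssim w(R)$. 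Assembling the pieces yields the desired bound $[\sigma,w]_{T^\Sp_{p'}}\lesssim K$.

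The main obstacle is justifying the weighted $L^{p'}$-bound for $M_0$ with a constant independent of the underlying measure; this universality is exactly what removes the factor of $2^r$ that caused trouble in the $A_r$-version of \cite{Lern2011} and so permits the limiting case $r=\infty$ conjectured there.
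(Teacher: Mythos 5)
Your algebraic preparation is correct and, in fact, reproduces the identities at the heart of the paper's own argument: the factorization $\dashint_Q w\cdot\dashint_Q\sigma=A_p(w,Q)^{1/(p-1)}(\dashint_Q w)^{(p-2)/(p-1)}$, the identity $\dashint_Q w=A^{\exp}_\infty(w,Q)\exp(\dashint_Q\log w)$, and the resulting bound $(\dashint_Q w)\sigma(Q)\le K\,|Q|\exp(\dashint_Q\log v)$ with $v=w\sigma=w^{(p-2)/(p-1)}$ are all right, as is the exponent bookkeeping $v^{p'}\sigma^{1-p'}=w$. The gap is exactly where you flag it, and it is not merely an obstacle to be worked around: the assertion that $M_0$ is bounded on $L^{p'}(\mu)$ for \emph{every} measure $\mu$ with a constant depending only on $p'$ is false. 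The geometric maximal operator is bounded on $L^q(\R^n,dx)$ for all $q>0$, but this is a feature of Lebesgue measure. For a generic $\mu$ it fails: taking $f_N=1+(N-1)\chi_{[1,2]}$ on $\R$, one computes via $Q=[x,2]$ that $M_0 f_N(x)\ge N^{1/(2-x)}$ for $x\in(0,1)$ while $f_N\equiv1$ there, so concentrating $\mu$ near $x=1$ (or simply taking $\mu$ to be Lebesgue measure restricted to $(0,1)$) makes $\|M_0 f_N\|_{L^{q}(\mu)}/\|f_N\|_{L^{q}(\mu)}$ grow without bound. Thus $\int_R(M_0 v)^{p'}\sigma^{1-p'}\,dx$ need not be comparable to $\int_R v^{p'}\sigma^{1-p'}\,dx=w(R)$ with an absolute constant; and any replacement bound for $M_0$ on $L^{p'}(\sigma^{1-p'})$ would necessarily depend on $w$, re-entering your estimate as an extra factor and spoiling the theorem.

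The repair is a reordering of H\"older that is precisely the point of the paper's proof. Rather than passing to the integral $\int_R M_0 v\cdot M^\D_\sigma g\,dx$ and then applying H\"older there, apply H\"older to the \emph{discrete} sum $\sum_{Q\in\Sp(R)}|Q|\exp(\dashint_Q\log v)\cdot\sigma(Q)^{-1}\int_Q g\sigma$, splitting each term into factors carrying $|Q|^{1/p}$ and $|Q|^{1/p'}$ respectively. The $1/p'$-factor becomes
$$\Bigl(\sum_{Q\in\Sp(R)}\exp\bigl(\dashint_Q\log w\bigr)|Q|\Bigr)^{1/p'}\lesssim\Bigl(\int_R M_0(w\chi_R)\,dx\Bigr)^{1/p'}\lesssim w(R)^{1/p'},$$
while for the $1/p$-factor one uses the pointwise inequality $\bigl(\sigma(Q)^{-1}\int_Q g\sigma\bigr)^p\exp(\dashint_Q\log\sigma)\le\exp\bigl(\dashint_Q\log((M^\D_\sigma g)^p\sigma)\bigr)$, which yields
$$\Bigl(\sum_{Q\in\Sp(R)}\exp\bigl(\dashint_Q\log((M^\D_\sigma g)^p\sigma)\bigr)|E_Q|\Bigr)^{1/p}\lesssim\Bigl(\int_R M_0\bigl((M^\D_\sigma g)^p\sigma\bigr)\,dx\Bigr)^{1/p}\lesssim\|M^\D_\sigma g\|_{L^p(\sigma)}\le p'.$$
In both places $M_0$ acts on $L^1(\R^n,dx)$ with respect to Lebesgue measure only; that is the actual mechanism that removes the $2^r$ obstruction from the $A_r$ argument of \cite{Lern2011}, not any universal weighted boundedness of $M_0$.
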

 \begin{theorem}\label{thm:testexp0} Under the same hypothesis as Theorem \ref{thm:testexp1} we have
$$[\sigma,w]_{T^\Sp_{p'}}\lesssim \Phi([w]_{A_p})^{\frac1p}[w]_{(A_p)^{\frac{1}{p}}(A^{\exp}_\infty)^{\frac{1}{p'}}}.$$
\end{theorem}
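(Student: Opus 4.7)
My plan is to establish the bound by duality combined with a decomposition of the sparse family indexed by the size of $A_p(w,Q)$.

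First, by $L^p(\sigma)$-duality,
$$\left(\int_R (T^{\Sp(R)} w)^{p'}\sigma\right)^{1/p'} = \sup_{h\geq 0,\ \|h\|_{L^p(\sigma)}=1} \sum_{Q\in\Sp(R)} \frac{w(Q)}{|Q|}\int_Q h\,\sigma,$$
so it suffices to bound the right-hand side by $C\,\Phi([w]_{A_p})^{1/p}\cdot [w]_{(A_p)^{1/p}(A^{\exp}_\infty)^{1/p'}}\cdot w(R)^{1/p'}$. The key algebraic identity, which follows from the definitions of $A_p$ and $A^{\exp}_\infty$, reads
$$\frac{w(Q)\sigma(Q)}{|Q|} = A_p(w,Q)^{1/p}\,A^{\exp}_\infty(w,Q)^{1/p'}\cdot|Q|^{1/p'}\exp\!\Bigl(\tfrac{1}{p'}\dashint_Q\log w\Bigr)\sigma(Q)^{1/p}.$$
Extracting $K:=[w]_{(A_p)^{1/p}(A^{\exp}_\infty)^{1/p'}}$ reduces the task to estimating
$$\sum_{Q\in\Sp(R)}|Q|^{1/p'}\exp\!\Bigl(\tfrac{1}{p'}\dashint_Q\log w\Bigr)\sigma(Q)^{1/p}\cdot\frac{1}{\sigma(Q)}\int_Q h\,\sigma\ \lesssim\ \Phi([w]_{A_p})^{1/p}\,w(R)^{1/p'}.$$

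To produce the logarithmic factor, I partition $\Sp(R)=\bigsqcup_{k=0}^{N}\Sp_k$ according to the dyadic size of $A_p(w,Q)$; since $A_p(w,Q)\in [1,[w]_{A_p}]$ by Jensen's inequality, $N+1\lesssim\Phi([w]_{A_p})$. On each level $\Sp_k$ I apply H\"older's inequality in the sum with exponents $(p',p)$, separating the geometric-mean factor from the $\sigma$-weighted factor. The geometric-mean factor is controlled by sparseness combined with the $L^1$-boundedness of the geometric maximal operator $M_0$: since $\exp(\dashint_Q \log w)\leq M_0(w\chi_R)(x)$ for every $x\in Q\subseteq R$,
$$\sum_{Q\in\Sp_k}|Q|\exp\!\Bigl(\dashint_Q\log w\Bigr)\leq 2\sum_{Q\in\Sp_k}\int_{E_Q}M_0(w\chi_R)\leq 2\int_R M_0(w\chi_R)\lesssim w(R).$$
The $\sigma$-weighted factor is handled by a Carleson embedding with respect to $\sigma$; crucially, the restriction to the single $A_p$-level $\Sp_k$ is what makes this embedding viable. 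A final H\"older inequality across the $N+1$ levels produces the factor $(N+1)^{1/p}\simeq \Phi([w]_{A_p})^{1/p}$, rather than $(N+1)^{1}$.

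The main technical obstacle lies in the $\sigma$-weighted Carleson step: the sequence $\{\sigma(Q)\}_{Q\in\Sp}$ is not in general a $\sigma$-Carleson sequence over arbitrary sparse families, so one must exploit the $A_p$-level constraint on $\Sp_k$ to salvage a usable embedding. Furthermore, obtaining the exponent $1/p$ on the logarithm (as opposed to the cruder exponent $1$ appearing in Theorem \ref{thm:W0}, whose proof uses the Fujii--Wilson $A_\infty$ constant where the geometric-mean trick is unavailable) hinges on a careful arrangement of the Hölder pairings within and across levels, so that only the $1/p$-power of the level count gets absorbed into the outer summation.
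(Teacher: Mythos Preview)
Your duality--H\"older--Carleson scheme has a genuine gap at the Carleson step. After H\"older on a fixed level $\Sp_k$, the $\sigma$-weighted factor is
\[
\Big(\sum_{Q\in\Sp_k}\sigma(Q)\,\big(\langle h\rangle_Q^\sigma\big)^p\Big)^{1/p},
\]
and bounding this by $C\|h\|_{L^p(\sigma)}$ requires $\{\sigma(Q)\}_{Q\in\Sp_k}$ to be a $\sigma$-Carleson sequence with constant $C^p$. You assert that freezing $A_p(w,Q)\simeq 2^k$ makes this viable, but in fact the Carleson constant on level $k$ is comparable to $2^{k/(p-1)}$, not $O(1)$. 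To see this, take the power weight $\sigma(x)=|x|^{-\beta}$ on $\R$ with $0<\beta<1$ and the sparse chain $Q_j=[0,2^{-j}]$. A direct computation gives $A_p(w,Q_j)=\big((1+\beta(p-1))(1-\beta)^{p-1}\big)^{-1}$ independent of $j$, so all $Q_j$ lie on a single level, yet
\[
\frac{1}{\sigma(Q_0)}\sum_{j\ge 0}\sigma(Q_j)=\frac{1}{1-2^{\beta-1}}\xrightarrow[\beta\to 1^-]{}\infty.
\]
Since the level index here is $k\simeq\log_2[w]_{A_p}\simeq (p-1)\log_2(1-\beta)^{-1}$, the blow-up matches $2^{k/(p-1)}$ exactly. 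Feeding this into your scheme yields an extra factor $[w]_{A_p}^{1/(p(p-1))}$ rather than $\Phi([w]_{A_p})^{1/p}$, which is strictly worse.

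The paper does not dualize at all. It keeps the quantity $\big\|\sum_{Q\in\Q^a}(\,\dashint_Q w\,)\chi_Q\big\|_{L^{p'}(\sigma)}$ and linearizes it via the corona decomposition (Lemma~\ref{lem:corona}): on each level $\Q^a$ the balance hypothesis $(\,\dashint_Q w\,)^{1/(p-1)}\sigma(Q)/|Q|\simeq\text{const}$ is precisely the frozen-$A_p$ condition, and the lemma outputs
\[
\Big\|\sum_{Q\in\Q^a}\big(\,\dashint_Q w\,\big)\chi_Q\Big\|_{L^{p'}(\sigma)}\lesssim\Big(\sum_{Q\in\Ca^a}\big(\,\dashint_Q w\,\big)^{p'}\sigma(Q)\Big)^{1/p'}
\]
over a stopping sub-collection $\Ca^a\subseteq\Q^a$. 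H\"older across the $K+2\simeq\Phi([w]_{A_p})$ levels then produces the factor $(K+2)^{1/p}$, and the remaining $\ell^{p'}$-sum is handled \emph{termwise} by the identity $(\,\dashint_Q w\,)^{p'}\sigma(Q)\le[w]_{(A_p)^{1/p}(A^{\exp}_\infty)^{1/p'}}^{p'}\exp(\,\dashint_Q\log w\,)|Q|$ together with sparseness and the $L^1$-bound for $M_0$. No Carleson embedding with weights $\sigma(Q)$ is ever needed; the corona lemma is doing the work you tried to assign to that embedding.
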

 \begin{theorem}\label{thm:testW0} Under the same hypothesis as Theorem \ref{thm:testexp1} we have
$$[\sigma,w]_{T^\Sp_{p'}}\lesssim \Phi([w]_{A_p})[w]_{(A_p)^{\frac{1}{p}}(A_\infty)^{\frac{1}{p'}}}.$$
\end{theorem}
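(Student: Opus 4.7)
By the Lacey-Sawyer-Uriarte-Tuero testing theorem recorded in the preliminaries, it suffices to show
\[
[\sigma,w]_{T^{\Sp}_{p'}} \;\lesssim\; \Phi([w]_{A_p})\,\mathcal{A},\qquad \mathcal{A} := [w]_{(A_p)^{1/p}(A_\infty)^{1/p'}}.
\]
Fixing $R\in\D$ and dualizing the $L^{p'}(\sigma)$ norm against $g\ge 0$ with $\|g\|_{L^p(\sigma)}=1$ reduces the task to proving
\[
\sum_{Q\in\Sp(R)}\tau(Q)\,\langle g\rangle_Q^\sigma \;\lesssim\; \Phi([w]_{A_p})\,\mathcal{A}\,w(R)^{1/p'},
\]
where $\tau(Q)=w(Q)\sigma(Q)/|Q|$ and $\langle g\rangle_Q^\sigma=\sigma(Q)^{-1}\int_Q g\,\sigma$. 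The algebraic identity $\tau(Q) = A_p(w,Q)^{1/p}\,w(Q)^{1/p'}\sigma(Q)^{1/p}$ combined with the defining inequality of the mixed constant gives the pointwise bound $\tau(Q)\le \mathcal{A}\,A_\infty(w,Q)^{-1/p'}w(Q)^{1/p'}\sigma(Q)^{1/p}$, which is the form in which $\mathcal{A}$ will be extracted.

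I would then run a parallel corona (double stopping time) decomposition. The outer family $\mathcal{F}$ is produced by a Calder\'on-Zygmund-type stopping on doublings of $\langle g\rangle_Q^\sigma$; this yields a $\sigma$-Carleson family satisfying $\langle g\rangle_Q^\sigma\le 2\langle g\rangle_F^\sigma$ whenever $\pi(Q)=F$. Inside each $F\in\mathcal{F}$ I would run a secondary stopping on $\sigma(Q)/|Q|$, obtaining Lebesgue-sparse subfamilies $\mathcal{G}(F)$ with $\sigma(Q)/|Q|\le 2\sigma(G)/|G|$ for $\pi_F(Q)=G$. Combining both stopping controls, the main sum is dominated by
\[
\sum_{F\in\mathcal{F}}\langle g\rangle_F^\sigma\sum_{G\in\mathcal{G}(F)}\frac{\sigma(G)}{|G|}\sum_{\pi_F(Q)=G}w(Q),
\]
and the innermost sum is bounded via the Fujii-Wilson $A_\infty$ Carleson embedding of Hyt\"onen-P\'erez by $A_\infty(w,G)\,w(G)$. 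Reassembling and feeding in the mixed-constant inequality leaves the expression
\[
\mathcal{A}\,\sum_{F\in\mathcal{F}}\langle g\rangle_F^\sigma\sum_{G\in\mathcal{G}(F)} A_\infty(w,G)^{1/p}\sigma(G)^{1/p}w(G)^{1/p'}.
\]

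The remaining obstacle is the residual factor $A_\infty(w,G)^{1/p}$. To dispose of it I would split the $G$-sum at the threshold $A_\infty(w,G)\simeq \Phi([w]_{A_p})$: the low part is absorbed into the constant directly, while on the high part the sharp reverse H\"older inequality of Hyt\"onen-P\'erez for Fujii-Wilson $A_\infty$ weights produces an improved summability, with the a priori estimate $[\sigma]_{A_\infty}\le[w]_{A_p}^{p'-1}$ coming from $\sigma\in A_{p'}$ pinning down the reverse H\"older exponent. Two H\"older steps and the Carleson embedding for the $\sigma$-sparse $\mathcal{F}$ (against $g^{p}\,d\sigma$) and for the Lebesgue-sparse $\mathcal{G}$ (against $w$) then collapse the double sum into $\Phi([w]_{A_p})\,\mathcal{A}\,w(R)^{1/p'}$. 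The principal difficulty---and the reason we pay $\Phi([w]_{A_p})$ rather than the $\Phi([w]_{A_p})^{1/p}$ of Theorem \ref{thm:testexp0}---is exactly the lack of a cube-by-cube comparison between $A_\infty^{\exp}(w,Q)$ and $A_\infty(w,Q)$ flagged in the introduction: in the exponential setting Jensen's inequality can collapse a local $A_\infty^{\exp}$ factor to save $1/p$ on the logarithm, whereas here that absorption has to be paid for globally through reverse H\"older at a cost of an additional $\Phi^{1/p'}$.
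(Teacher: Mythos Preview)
Your route is genuinely different from the paper's, and the last step is a real gap rather than a routine bookkeeping issue.

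The paper never dualizes for this theorem. It slices $\Sp(R)$ into the $K+2\simeq\Phi([w]_{A_p})$ level sets $\Q^a=\{Q:A_p(w,Q)\simeq 2^a\}$, applies the corona Lemma~\ref{lem:corona} inside each slice (measure $\sigma$, sequence $a_Q=\dashint_Q w$), and then uses that \emph{within a slice} $(\dashint_Q w)^{p'}\sigma(Q)\simeq 2^{ap'/p}(w(Q)/|Q|)|Q|$. Sparsity converts the inner sum over a maximal corona cube $Q$ into $\int_Q M(w\chi_Q)=A_\infty(w,Q)\,w(Q)$, so that $2^{ap'/p}A_\infty(w,Q)\simeq A_p(w,Q)^{p'/p}A_\infty(w,Q)\le\mathcal A^{p'}$ is extracted with both factors evaluated on the \emph{same} cube. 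The disjoint maximal cubes give $\sum w(Q)\le w(R)$, and the logarithm is nothing but the number of slices. The whole point of the $A_p$-slicing is to synchronize the cube on which $A_p$ and $A_\infty$ are measured.

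Your argument loses exactly this synchronization. You extract $\mathcal A$ at a cube $Q$, leaving $A_\infty(w,Q)^{-1/p'}$, but the Carleson embedding you then apply produces $A_\infty(w,G)$ on the stopping parent $G$; there is no relation between these two quantities, so the leftover is genuinely $A_\infty(w,G)^{1/p}$ with no compensating negative power. The fix you propose does not work as stated. The sharp reverse H\"older inequality of Hyt\"onen--P\'erez bounds $(\dashint_G w^r)^{1/r}$ by $2\dashint_G w$ for $r-1\simeq[w]_{A_\infty}^{-1}$; it says nothing about the \emph{size} of $A_\infty(w,G)$ and supplies no decay of the $G$-sum as $A_\infty(w,G)$ grows, so ``improved summability on the high part'' has no content here. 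The bound $[\sigma]_{A_\infty}\le[w]_{A_p}^{p'-1}$ only gives a reverse H\"older exponent $1+c[w]_{A_p}^{1-p'}$, which is polynomially, not logarithmically, close to $1$. Even on the low part $A_\infty(w,G)\le\Phi([w]_{A_p})$, after H\"older you must control $\sum_F\sum_{G\in\mathcal G(F)} w(G)$; the union $\bigcup_F\mathcal G(F)$ is merely a sparse subfamily of $\Sp(R)$, so this sum is only $\lesssim[w]_{A_\infty}w(R)$, reintroducing a polynomial loss. As written, the argument does not reach $\Phi([w]_{A_p})\,\mathcal A\,w(R)^{1/p'}$.
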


We note that Theorems \ref{thm:testexp1}, \ref{thm:testexp0}, and \ref{thm:testW0} easily imply the corresponding weak type bound in \ref{thm:exp1}, \ref{thm:exp0}, and \ref{thm:W0} respectively, since
$$\|T\|_{L^{p,\infty}(w)}=\|T(\,\cdot\,\sigma)\|_{L^p(\sigma)\ra L^{p,\infty}(w)}\lesssim \sup_{\Sp\subseteq \D} \|T^\Sp(\,\cdot\,\sigma)\|_{L^p(\sigma)\ra L^p(w)}.$$
They also imply the strong type bounds by symmetry.  Indeed, by interchanging the roles of $w$ with $\sigma$ and $p$ with $p'$ we see that
$$[w,\sigma]_{T^\Sp_{p}}\lesssim [\sigma]_{(A_{p'})^{\frac{1}{p'-1}}(A^{\exp}_\infty)^{1-\frac{1}{p'-1}}},$$
$$[w,\sigma]_{T^\Sp_{p}}\lesssim \Phi([\sigma]_{A_{p'}})^{\frac{1}{p'}}[\sigma]_{(A_{p'})^{\frac{1}{p'}}(A^{\exp}_\infty)^{\frac{1}{p}}},$$
and
$$[w,\sigma]_{T^\Sp_{p}}\lesssim \Phi([\sigma]_{A_{p'}})[\sigma]_{(A_{p'})^{\frac{1}{p'}}(A_\infty)^{\frac{1}{p}}} \simeq\Phi([w]_{A_{p}})[\sigma]_{(A_{p'})^{\frac{1}{p'}}(A_\infty)^{\frac{1}{p}}}.$$

\section{Mixed estimates for the Hardy-Littlewood maximal operator} \label{maximal}
In this section we prove Theorem \ref{thm:maxW}.  Our techniques will strongly parallel those for Calder\'on-Zygmund operators.  Define the testing constant
$$[u,\sigma]_{S_p}^p=\sup_R\frac{\int_R M(\chi_R\sigma)^pw}{\sigma(R)}.$$
It was shown by the second author in \cite{MR2534183} that
\begin{equation}\label{eqn:testmax} \|M\|_{L^p(w)}\simeq [u,\sigma]_{S_p}.\end{equation}
\begin{proof}[Proof of Theorem \ref{thm:maxW}] 
By combining \eqref{eqn:dyadmaxbd}, \eqref{eqn:sparsemax}, and \eqref{eqn:testmax} we see that it suffices to estimate the quantity
\begin{equation}\label{eqn:maxbilinear} \int_RM^\Sp(\chi_R\sigma)^pw\,dx=\sum_{Q\in\Sp(R)}\Big(\frac{\sigma(Q)}{|Q|}\Big)^p w\big(E_Q\big)\end{equation}
for a sparse family $\Sp\subseteq \D$.  Note that we have used the disjointness of the family $\{E_Q\}_{Q\in \Sp}$ in the equality \eqref{eqn:maxbilinear}.  For $a\in \Z$ define
$$\Q^a=\{ Q\in \Sp: 2^{a-1}<A_p(w,Q)\leq 2^a\}.$$
Then $\Q^a$ is empty if $a> \log_2[w]_{A_p}$ or $a< -1$.  Set $K=\lfloor \log_2[w]_{A_p}\rfloor$.
Then,
$$\Sp(R)=\bigcup_{a=-1}^K \Q^a,$$
and the sum in \eqref{eqn:maxbilinear} is bounded by
\begin{align}\sum_{Q\in\Sp(R)}\Big(\frac{\sigma(Q)}{|Q|}\Big)^p w\big(E_Q\big)&\leq \sum_{a=-1}^K\sum_{Q\in \Q^a}\frac{w(Q)}{|Q|}\Big(\frac{\sigma(Q)}{|Q|}\Big)^p|Q|.\nonumber\\
&\leq \sum_{a=-1}^K2^a\sum_{Q\in \Q^a}\sigma(Q).\label{Qasum}
\end{align}
Let $\Q^a_{\max}$ be the collection of maximal cubes in $\Q^a$, then
\begin{align*}
\sum_{Q\in \Q^a}\sigma(Q)&=\sum_{Q\in \Q^a_{\max}}\sum_{\substack{P\in \Q^a\\ P\subseteq Q}}\sigma(P)\\
&\simeq\sum_{Q\in \Q^a_{\max}}\sum_{\substack{P\in \Q^a\\ P\subseteq Q}}\frac{\sigma(P)}{|P|}|E_P|\\
&\leq \sum_{Q\in \Q^a_{\max}}\int_{Q}M(\chi_Q\sigma).\,
\end{align*}
Substituting this back into \eqref{Qasum} we arrive at
\begin{align*}\int_RM^\Sp(\chi_R\sigma)^pw\,dx &\lesssim \sum_{a=-1}^K\sum_{Q\in \Q^a}\frac{w(Q)}{|Q|}\Big(\frac{\sigma(Q)}{|Q|}\Big)^p|Q|.\\
&\leq \sum_{a=-1}^K2^a\sum_{Q\in \Q^a}\sigma(Q).\\
&\leq \sum_{a=-1}^K2^a\sum_{Q\in \Q_{\max}^a}\int_{Q}M(\chi_Q\sigma)\,dx.\\
&\leq \sum_{a=-1}^K\sum_{Q\in \Q_{\max}^a}A_{p'}(\sigma,Q)^{\frac{p}{p'}}A_\infty(\sigma,Q) \sigma(Q)\\
&\leq [\sigma]_{(A_{p'})^{\frac{1}{p'}}(A_\infty)^{\frac1p}}^p\sum_{a=-1}^K\sum_{Q\in \Q_{\max}^a}\sigma(Q)\\
&\lesssim[\sigma]_{(A_{p'})^{\frac{1}{p'}}(A_\infty)^{\frac1p}}^p(1+K)\sigma(R)\\
&\simeq[\sigma]_{(A_{p'})^{\frac{1}{p'}}(A_\infty)^{\frac1p}}^p\Phi([\sigma]_{A_{p'}})\sigma(R).
\end{align*}

\end{proof}

\section{Mixed bounds for sparse operators} \label{proofs}
	We now prove our main results for sparse operators, Theorems \ref{thm:testexp1}, \ref{thm:testexp0}, and \ref{thm:testW0}, which, as mentioned above, imply Theorems \ref{thm:exp1}, \ref{thm:exp0}, and \ref{thm:W0} respectively.  In all of the estimates we aim to bound the testing constant
$$[\sigma,w]_{T^\Sp_{p'}}^{p'}=\sup_R\frac{\int_R (T^{\Sp(R)}w)^{p'}\sigma\,dx}{w(R)}.$$	
We begin with the proof of Theorem \ref{thm:testexp1} because it requires different machinery than Theorems \ref{thm:testexp0} and \ref{thm:testW0}.  Specifically, we are able to prove Theorem \ref{thm:testexp1} without a corona decomposition, while our proofs of the other results require this tool.
	
\begin{proof}[Proof of Theorem \ref{thm:testexp1}] It suffices to show that for a dyadic grid $\D$, sparse subset $\Sp$, and fixed cube $R$ that
$$\int_R\Big( \sum_{Q\in \Sp(R)} \Big(\,\dashint_Q w\,\Big)\chi_Q\Big)^{p'}\sigma\,dx\lesssim [w]_{(A_p)^{\frac{1}{p-1}}(A_\infty^{\exp})^{1-\frac{1}{p-1}}}^{p'}w(R).$$
Multiplying and dividing by the expression defining $[w]_{(A_p)^{\frac{1}{p-1}}(A_\infty^{\exp})^{1-\frac{1}{p-1}}}$, we have that the problem reduces to showing
$$\int_R\Big( \sum_{Q\in \Sp(R)}\frac{|Q|}{\sigma(Q)} \Big(\exp\big(\,\dashint_Q \log w\,\big)\Big)^{1-\frac{1}{p-1}}\chi_Q\Big)^{p'}\sigma\,\lesssim w(R)$$
with the implicit constant independent of $w$.  By duality this is equivalent to showing for $\|g\|_{L^p(\sigma)}=1$ that
\begin{equation}\label{eqn:reducesumbd} \sum_{Q\in \Sp(R)}\frac{|Q|}{\sigma(Q)} \Big(\exp\big(\,\dashint_Q \log w\,\big)\Big)^{1-\frac{1}{p-1}}\int_Qg\sigma\,\lesssim w(R)^{1/p'}.\end{equation}
We work with the sum in \eqref{eqn:reducesumbd}, by H\"older's inequality we have 
\begin{align*}
\lefteqn{\sum_{Q\in \Sp(R)}\frac{|Q|}{\sigma(Q)} \Big(\exp\big(\,\dashint_Q \log w\,\big)\Big)^{1-\frac{1}{p-1}}\int_Qg\sigma\,}\\
&\leq \Big(\sum_{Q\in \Sp(R)}\Big(\frac{1}{\sigma(Q)}\int_Qg\sigma\,\Big)^p \Big(\exp\big(\,\dashint_Q \log w\,\big)\Big)^{1-p'}|Q|\Big)^{\frac1p}\\
&\qquad \times \Big(\sum_{Q\in \Sp(R)}\exp\big(\,\dashint_Q \log w\,\big)|Q|\Big)^{\frac{1}{p'}}.
\end{align*}
The second factor satisfies
\begin{align*}{\Big(\sum_{Q\in \Sp(R)}\exp\big(\,\dashint_Q \log w\,\big)|Q|\Big)^{\frac{1}{p'}}}\lesssim\lefteqn{\Big(\sum_{Q\in \Sp(R)}\exp\big(\,\dashint_Q \log w\,\big)|E_Q|\Big)^{\frac{1}{p'}}} \\
&\leq \Big(\int_R M_0(w\chi_R)\,\Big)^{\frac{1}{p'}}\lesssim w(R)^{1/p'}.
\end{align*}
Meanwhile, observing that
$$\Big(\exp\big(\,\dashint_Q \log w\,\big)\Big)^{1-p'}=\exp\big(\,\dashint_Q \log \sigma\,\big)$$
we see that the first multiple satisfies
\begin{align*}
\lefteqn{\Big(\sum_{Q\in \Sp(R)}\Big(\frac{1}{\sigma(Q)}\int_Qg\sigma\,\Big)^p \Big(\exp\big(\,\dashint_Q \log w\,\big)\Big)^{1-p'}|Q|\Big)^{\frac1p}}\\
&\qquad\lesssim \Big(\sum_{Q\in \Sp(R)} \exp\big(\,\dashint_Q \log\big( (M^\D_\sigma g)^p\sigma\big)\,\big)|E_Q|\Big)^{\frac1p}\\
&\qquad\lesssim \Big(\int_R M_0\big((M_\sigma^\D g)^p\sigma\big)\,\Big)^{1/p}.\\
\end{align*}
Since the $M_0$ is bounded on $L^1(\R^n)$ and $M_\sigma^\D$ is bounded on $L^p(\sigma)$ we see that this concludes the proof of our theorem.
\end{proof}

To prove Theorems \ref{thm:testexp0} and \ref{thm:testW0} we will use what by now has become a standard technique, a decomposition of dyadic operators known as a corona decomposition.    Similar decompositions can be found in the works \cite{CM2012,Hy2011,HP2011, HPTV2010,L2011,MR2657437}.  We provide a detailed proof of our corona decomposition noting that sparse families of cubes simplify some of the calculations.  In order to state it we need to define a maximal function.  Suppose $\Q$ is a family of dyadic cubes and $a=\{a_Q\}_{Q\in \Q}$ is a sequence indexed by the members of $\Q$, define the maximal function $M^\Q a=\sup_{Q\in \Q} |a_Q|\chi_Q.$   We have the following lemma.  
\begin{lemma}[Corona decomposition] \label{lem:corona} Suppose $R$ is a cube, $\Q$ is a collection of sparse cubes contained in $R$, $1< p<\infty$, $\nu$ is a Borel measure on $R$, and $a=\{a_Q\}_{Q\in \Q}$ is a sequence of positive constants that satisfy the following: 
\begin{enumerate}[(a)] \item $M^\Q a$ is finite almost everywhere on $R$,
\item there exist constants $c,C,r>0$ such that
\begin{equation}\label{eqn:balance} c\leq (a_Q )^r\frac{\nu(Q)}{|Q|}\leq C \qquad  \qquad Q\in \Q.\end{equation}
\end{enumerate}
Then there exists a sub-collection of cubes, $\Ca\subseteq \Q$, called the corona decomposition of $\Q$, such that the following inequality holds:
$$\left(\int_R\Big(\,\sum_{Q\in \Q}a_Q\cdot\chi_Q\Big)^p\,d\nu\right)^{\frac1p}\lesssim \frac{C}{c} \left(\sum_{Q\in \Ca}(a_Q)^p\nu(Q)\right)^{\frac1p}.$$
\end{lemma}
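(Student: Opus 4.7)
The plan is to construct $\Ca$ by a principal-cubes stopping-time on $\{a_Q\}_{Q\in\Q}$, exploit (b) to transfer the Lebesgue sparsity of $\Q$ into a $\nu$-sparsity of $\Ca$, and then obtain the $L^p(\nu)$ estimate by a duality/Carleson-embedding argument.

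First, I would build $\Ca$ iteratively. Set $\Ca_0$ to be the $\Q$-maximal cubes contained in $R$; given $\Ca_k$, declare the $\Ca$-children $\mathrm{ch}_\Ca(F)$ of $F\in\Ca_k$ to be the maximal cubes $Q\in\Q$ with $Q\subsetneq F$ and $a_Q>t\,a_F$, where $t>1$ is a threshold to be chosen below. Set $\Ca=\bigcup_k\Ca_k$. Hypothesis (a) guarantees that each $Q\in\Q$ has a smallest $\Ca$-ancestor $\pi(Q)$, and by construction $a_Q\leq t\,a_{\pi(Q)}$.

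Second, I would derive the Carleson property of $\Ca$ with respect to $\nu$ from condition (b). For any $F'\in\mathrm{ch}_\Ca(F)$, the stopping forces $a_{F'}>t\,a_F$, so applying (b) at $F'$ (upper bound) and at $F$ (lower bound) gives
\[
\nu(F')\leq \frac{C|F'|}{(t\,a_F)^r},\qquad \nu(F)\geq \frac{c|F|}{a_F^r}.
\]
Summing the first estimate over the pairwise Lebesgue-disjoint children $F'\subseteq F$ and dividing by the second yields $\sum_{F'\in\mathrm{ch}_\Ca(F)}\nu(F')\leq (C/c)\,t^{-r}\,\nu(F)$. Picking $t:=(2C/c)^{1/r}$ produces a Carleson constant of $\tfrac12$, so the sets $E_F^\Ca:=F\setminus\bigcup_{F'\in\mathrm{ch}_\Ca(F)}F'$ satisfy $\nu(E_F^\Ca)\geq\tfrac12\nu(F)$ and are pairwise $\nu$-disjoint; equivalently, $\Ca$ is $\nu$-sparse, and the standard Carleson embedding theorem is available for this family.

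Third, I would estimate the $L^p(\nu)$-norm by duality. Testing against $g\geq 0$ with $\|g\|_{L^{p'}(\nu)}=1$ and grouping cubes according to $\pi$, the stopping $a_Q\leq t\,a_{\pi(Q)}$ transforms $\sum_Q a_Q\int_Q g\,d\nu$ into a sum over $F\in\Ca$ of $a_F$ times an inner sum over $\Q(F):=\pi^{-1}(F)$. The inner sum is then dominated by an integral of the $\nu$-weighted dyadic maximal function $M_\nu^\D g$ over $F$, using (b) again to ensure that $\Q(F)$ inherits a $\nu$-sparseness of its own. Hölder's inequality over $F\in\Ca$, the Carleson embedding from the second step applied to $g$, and the $L^{p'}(\nu)$-boundedness of $M_\nu^\D$ combine to yield the desired inequality, with total constant proportional to $C/c$. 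The main obstacle is controlling this inner sum, because a single point of $F$ may lie in arbitrarily many cubes of $\Q(F)$, so the pointwise depth $\sum_{Q:\pi(Q)=F}\chi_Q$ is unbounded; the resolution is to avoid pointwise control altogether and to handle the depth through the $L^{p'}(\nu)$-bound for the weighted maximal function, whose $p$-dependent operator norm is absorbed by the implicit constant in $\lesssim$.
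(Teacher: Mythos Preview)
Your construction of $\Ca$ and the $\nu$-Carleson estimate in step two are correct, but step three has a genuine gap: the assertion that $\Q(F)=\pi^{-1}(F)$ ``inherits a $\nu$-sparseness of its own'' from (b) is false, and without it the inner sum $\sum_{Q\in\Q(F)}\int_Q g\,d\nu$ cannot be dominated by $\int_F M_\nu^\D g\,d\nu$. Condition (b) relates $\nu(Q)$ to $|Q|/a_Q^r$, but inside a single corona the values $a_Q$ are only bounded \emph{above} by $t\,a_F$; they may be arbitrarily small, so no uniform comparison between $\nu$ and Lebesgue measure is available across $\Q(F)$. Concretely, take the chain $\Q=\{[0,2^{-k}]:0\le k\le N\}$ with $a_{[0,2^{-k}]}=2^{-k}$ and $\nu=\delta_0$: then (b) holds with $c=C=r=1$, no stopping ever occurs (so $\Ca=\{[0,1]\}$ and $\Q(F)=\Q$), yet $\sum_{Q\in\Q(F)}\nu(Q)=N+1$ while $\nu(F)=1$. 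Taking $g=\chi_F$ shows your inner-sum bound fails by the factor $N+1$. The underlying problem is that replacing $a_Q$ by $t\,a_F$ is fatally lossy when $a_Q\ll a_F$; the discarded decay in $a_Q$ is precisely what should have offset the depth of $\Q(F)$.

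The paper's remedy is a second decomposition \emph{inside} each corona: it slices $\Q(F)$ into layers $\Q^b(F)=\{Q:2^{-b}a_F<a_Q\le 2^{1-b}a_F\}$. Within a fixed layer the $a_Q$ are constant up to a factor of two, so (b) now legitimately converts Lebesgue sparsity into $\nu$-control (the paper obtains $\nu(\Omega_k)\lesssim 2^{-k}(C/c)\,\nu(F)$ for the level sets of the depth function restricted to $\Q^b(F)$). The substitution $a_Q\mapsto a_F$ then costs exactly $2^{-b}$, producing a convergent geometric series in $b$. This extra slicing by the ratio $a_Q/a_F$ is the missing ingredient; once you insert it, either your duality route or the paper's direct Minkowski argument goes through.
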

\begin{proof} Let $C^0$ denote the collection of all maximal cubes in $\Q$ and define $C^k$ for $k>1$ inductively as follows: $Q\in C^k$ if and only if the following three criteria are satisfied
\begin{enumerate}
\item there exists $P\in C^{k-1}$ containing $Q$,
\item the inequality
\begin{equation}\label{stopineq}a_Q>2\cdot a_P\end{equation}
holds, 
\item and $Q$ is maximal with respect to inclusion in $\Q$.  
\end{enumerate}
We note that when $k>1$, $C^k$ could possibly be empty.  Set $\Ca=\bigcup_{k}C^k$.  By the maximality of the stopping cubes, given any $Q\in \Q$ there exists a smallest $P\in \Ca$ such that $P\supseteq Q$; we denote such $P$ by $\Pi(Q)$.  Notice that the opposite inequality to \eqref{stopineq} must hold for $Q$ and $\Pi(Q)$, that is,
$$a_Q\leq 2\cdot a_{\Pi(Q)}.$$
For $P\in \Ca$ let
$$\Q(P)=\{Q\in \Q: \Pi(Q)=P\}.$$
Given $Q\in \Q(P)$ we now fix the ratio between $a_Q$ and $a_P$: for $b=0,1,2,\ldots$ and $P\in \Ca$ let $\Q^b(P)$ be all $Q\in \Q(P)$ such that
\begin{equation}\label{freezeavg}2^{-b}a_P < a_Q\leq 2^{-b+1}a_P.\end{equation}
Then, 
\begin{align*}\sum_{Q\in \Q}a_Q\cdot\chi_Q&=\sum_{P\in \Ca}\sum_{b=0}^\infty \sum_{Q\in \Q^b(P)}a_Q\cdot\chi_Q\\
&\leq\sum_{b=0}^\infty2^{-b}\sum_{P\in \Ca}a_P \sum_{Q\in \Q^b(P)}\chi_Q.
\end{align*}
For $k\geq 0$ define the sets 
$$E_k(P)=\Big\{x\in R:k<\sum_{Q\in \Q^b(P)}\chi_Q\leq k+1\Big\}$$
and
$$\Omega_k(P)=\Big\{x\in R:\sum_{Q\in \Q^b(P)}\chi_Q> k\Big\}.$$
We may further decompose the sum:
\begin{align*}\lefteqn{\sum_{b=0}^\infty2^{-b}\sum_{P\in \Ca}a_P \sum_{Q\in \Q^b(P)}\chi_Q}\\
&\leq \sum_{b=0}^\infty2^{-b}\sum_{k=0}^\infty (k+1)\sum_{P\in \Ca}a_P\cdot \chi_{E_k(P)}\\
&\leq \sum_{b=0}^\infty2^{-b}\sum_{k=0}^\infty (k+1)\sum_{P\in \Ca}a_P \cdot\chi_{\Omega_k(P)}.
\end{align*}
By Minkowski's inequality we have
\begin{align}\lefteqn{\Big(\int_R\Big(\sum_{Q\in \Q}a_Q\cdot\chi_Q\Big)^p\,d\nu\Big)^{\frac1p}}\nonumber \\ &\qquad \leq\sum_{b=0}^\infty2^{-b}\sum_{k=0}^\infty(k+1)\Big(\int_R\Big(\sum_{P\in \Ca}a_P\cdot \chi_{\Omega_k(P)}\Big)^p\,d\nu\Big)^{\frac1p}\label{sumbound}.\end{align}
Fix $x\in \{M^\Q a<\infty\}$.  Since $M^\Q a$ is finite a.e., there are at most finitely many stopping cubes that contain $x$.  Let $P_0\subseteq \cdots \subset P_m$ be the stopping cubes such that $x\in \Omega_k(P_i)\subseteq P_i$.  By construction, we have 
$$a_{P_i}< 2^{-i}a_{P_0}, \qquad i=1,\ldots,m.$$
For such $x$ we have
$$\Big(\sum_{P\in \Ca}a_P\cdot \chi_{\Omega_k(P)}(x)\Big)^p=\Big(\sum_{i=0}^ma_{P_i}\Big)^p<2^p (a_{P_0})^p\leq 2^p\sum_{P\in \Ca} (a_P)^p\chi_{\Omega_k(P)}(x).$$
Thus we may move the power $p> 1$ inside the innermost sum in \eqref{sumbound} to arrive at the bound
\begin{equation}\label{eqn:sumbd} \sum_{b=0}^\infty2^{-b}\sum_{k=0}^\infty(k+1)\Big(\sum_{P\in \Ca}(a_P)^p \,\nu\big(\Omega_k(P)\big)\Big)^{\frac1p}.\end{equation}
Finally notice that for each $k$ the set 
$$\Omega_k(P)=\Big\{x\in R:\sum_{Q\in \Q^b(P)}\chi_Q> k\Big\}=\bigcup_{j} Q^k_j$$
where for each $\{Q^k_j\}_j$ is a family of disjoint dyadic cubes in $\Q^b(P)$.  The cubes $\{Q_j^k\}$ are defined as follows.  Let $\{Q^0_j\}_j$ be the collection of all maximal cubes in $\Q^b(P)$ and define $\{Q^{k+1}_j\}_j$ inductively as those cubes that are maximal with respect to inclusion in $\Q^b(P)$ and contained in some $Q^k_{j}$.  By the sparsity condition we have that  
$$|\Omega_k(P)|\leq 2^{-k}|\Omega_1(P)|.$$  
For each $Q^k_j$, by combining \eqref{eqn:balance} and \eqref{freezeavg}, we have
$$c|Q^k_j|\big(2^{1-b} a_P\big)^{-r}\leq \nu(Q^k_j)\leq C\big(2^{-b}a_P\big)^{-r}|Q^k_j|$$
which implies
\begin{align*}\nu(\Omega_{k}(P))&=\sum_j \nu(Q^k_j)\leq C\big(2^{-b}a_P\big)^{-r}\sum_j|Q^k_j|\\
&= C\big(2^{-b}a_P\big)^{-r}|\Omega_k(P)|\leq 2^{-k}C\Big(2^{-b}\frac{|P|}{\mu(P)}\Big)^r|\Omega_1(P)|\\
&= 2^{-k}C\Big(2^{-b}a_P\Big)^{-r}\sum_j|Q^1_j|\lesssim 2^{-k}\frac{C}{c}\sum_j\nu(Q^1_j)\\&\leq 2^{-k}\frac{C}{c}\nu(P).
\end{align*}
Substituting this inequality into the sum \eqref{eqn:sumbd} we are able to sum in $k$ to arrive at the desired bound.
  \end{proof}

\begin{proof}[Proof of Theorem \ref{thm:testexp0}]

For the proof fix a cube $R$ and recall that $\Sp(R)=\{Q\in \Sp:Q\subseteq R\}$.  We aim to show that
\begin{equation*}\Big(\int_R\Big( \sum_{Q\in \Sp(R)} \Big(\,\dashint_Q w\Big)\chi_Q\Big)^{p'}\sigma\Big)^{\frac{1}{p'}}\lesssim  \Phi([w]_{A_p})^{\frac{1}{p}}[w]_{(A_p)^{\frac{1}{p}}(A^{\exp}_\infty)^{\frac{1}{p'}}}w(R)^{\frac{1}{p'}}.\end{equation*}

We first freeze $A_{p}$ constant.  Given $a\in \Z$ define
\begin{align*}\Q^a&:=\Big\{Q\in \Sp(R): 2^{a}<\Big(\,\dashint_Q w\,dx\Big)\Big(\,\dashint_Q \sigma\,dx\Big)^{p-1}\leq 2^{a+1}\Big\}\end{align*}
i.e., $\Q^a$ is all cubes in $\Sp(R)$ with $A_{p}(w,Q)\simeq 2^a$.  Notice that $\Q^a$ is empty if $a>\log_2[w]_{A_{p}}$ or $a<-1$.   As in the proof of Theorem \ref{thm:maxW} set $K=\lfloor \log_2[w]_{A_p}\rfloor$.  Then we have
$$\sum_{Q\in \Sp(R)}\Big(\,\dashint_Q w\Big)\chi_Q=\sum_{a=-1}^K \sum_{Q\in \Q^a} \Big(\,\dashint_Q w\Big)\chi_Q.$$
We now use Lemma \ref{lem:corona} to perform a corona decompositions of the sets $\Q^a$ with respect to the measure $\sigma$ and sequence $a_Q=\dashint_Q w$, $Q\in \Q^a$.  We have that there exists subset collections $\Ca^a$ of $\Q^a$ such that
\begin{align*}
\lefteqn{\Big(\int_R\Big( \sum_{Q\in \Sp(R)} \Big(\,\dashint_Q w\Big)\chi_Q\Big)^{p'}\sigma\Big)^{\frac{1}{p'}}\lesssim \sum_{a=-1}^K\Big(\sum_{Q\in \Ca^a}\Big(\,\dashint_Q w\,\Big)^{p'}\sigma(Q)\Big)^{\frac{1}{p'}}}\\
&\qquad\qquad\qquad\qquad\leq (K+1)^{\frac1p}\Big(\sum_{a=-1}^K\sum_{Q\in \Ca^a}\Big(\,\dashint_Q w\,\Big)^{p'}\sigma(Q)\Big)^{\frac{1}{p'}}\\
&\qquad \qquad\qquad\qquad\lesssim \Phi([w]_{A_p})^{\frac1p}\Big(\sum_{Q\in \Sp(R)}\Big(\,\dashint_Q w\,\Big)^{p'}\sigma(Q)\Big)^{\frac{1}{p'}}
\end{align*}
where in the second inequality we have used H\"older's inequality.   We now easily estimate the remaining sum

\begin{align*}\sum_{Q\in \Sp(R)}\Big(\,\dashint_Q w\,\Big)^{p'}\sigma(Q) &\leq [w]_{(A_p)^{\frac1p}(A_\infty^{\exp})^{\frac{1}{p'}}}^{p'}\sum_{Q\in \Sp(R)}\exp\big(\,\dashint_Q\log w\big)|Q|\\
&\lesssim [w]_{(A_p)^{\frac1p}(A^{\exp}_\infty)^{\frac{1}{p'}}}^{p'}\int_RM_0(w\chi_R)\\
&\lesssim[w]_{(A_p)^{\frac1p}(A^{\exp}_\infty)^{\frac{1}{p'}}}^{p'}w(R).
\end{align*}

\end{proof}
We now give a brief sketch of the proof of Theorem \ref{thm:testW0}.

\begin{proof}[Proof of Theorem \ref{thm:testW0}] With the same notation and reasoning as in the proof of Theorem \ref{thm:testexp0} we arrive at the estimate
\begin{align*}
\Big(\int_R\Big( \sum_{Q\in \Sp(R)} \Big(\,\dashint_Q w\Big)\chi_Q\Big)^{p'}\sigma\Big)^{\frac{1}{p'}}&\lesssim \sum_{a=-1}^K\Big(\sum_{Q\in \Ca^a}\Big(\,\dashint_Q w\,\Big)^{p'}\sigma(Q)\Big)^{\frac{1}{p'}}.\\
\end{align*}
Let $\Ca^a_{\max}$ be the collection of maximal cubes in $\Ca^a$, then
\begin{align*}
\sum_{a=-1}^K\Big(\sum_{Q\in \Ca^a}\Big(\,\dashint_Q w\,\Big)^{p'}\sigma(Q)\Big)^{\frac{1}{p'}}&\leq\sum_{a=-1}^K\Big(2^{a\frac{p'}{p}}\sum_{Q\in \Ca^a}\frac{w(Q)}{|Q|}|Q|\Big)^{\frac{1}{p'}} \\
&\leq\sum_{a=-1}^K\Big(2^{a\frac{p'}{p}}\sum_{Q\in \Ca_{\max}^a}\sum_{\substack{P\in \Ca^a\\ P\subseteq Q}}\frac{w(P)}{|P|}|P|\Big)^{\frac{1}{p'}} \\
&\leq\sum_{a=-1}^K\Big(2^{a\frac{p'}{p}}\sum_{Q\in \Ca_{\max}^a}\int_QM(\chi_Qw)\Big)^{\frac{1}{p'}} \\
&\lesssim [w]_{(A_p)^{\frac1p}(A_\infty)^{\frac{1}{p'}}}\sum_{a=-1}^K\Big(\sum_{Q\in \Ca_{\max}^a}w(Q)\Big)^{\frac{1}{p'}} \\
&\lesssim \Phi([w]_{A_p})[w]_{(A_p)^{\frac1p}(A_\infty)^{\frac{1}{p'}}}w(R)^{\frac{1}{p'}}.
\end{align*}
\end{proof}

Finally we end with the short observation that proves Theorem \ref{thm:mixedp<q}.
\begin{proof}[Proof of Theorem \ref{thm:mixedp<q}] Suppose $1<q<p<\infty$ and let $\ep=p-q$.  By inequality \eqref{eqn:mixedweak2sup} we have
$$\|T\|_{L^{p+\ep,\infty}(w)}\lesssim [w]_{A_{p+\ep}}^{\frac{1}{p+\ep}}[w]_{A_\infty}^{\frac{1}{(p+\ep)'}}\leq[w]_{A_{q}}^{\frac{1}{p+\ep}}[w]_{A_\infty}^{\frac{1}{(p+\ep)'}} $$
and
$$\|T\|_{ L^{p-\ep,\infty}(w)}\lesssim [w]_{A_{p-\ep}}^{\frac{1}{p-\ep}}[w]_{A_\infty}^{\frac{1}{(p-\ep)'}}=[w]_{A_{q}}^{\frac{1}{p-\ep}}\|w\|_{A_\infty}^{\frac{1}{(p-\ep)'}}.$$
By the Marcinkiewicz interpolation theorem we have
\begin{align*}\|T\|_{L^p(w)}&\lesssim \Big(\frac{2}{\ep}\Big)^{1/p}\|T\|^\theta_{ L^{p-\ep,\infty}(w)}\|T\|^{1-\theta}_{L^{p+\ep,\infty}(w)}
\end{align*}
where \begin{equation}\label{eqn:interp}\frac1p=\frac{\theta}{p-\ep}+\frac{1-\theta}{p+\ep}.\end{equation}  Using the weak bounds for $p-\ep$ and $p+\ep$ we have
\begin{align*}\|T\|_{L^p(w)\ra L^p(w)}&\lesssim (p-q)^{-1/p}[w]_{A_{q}}^{\frac{\theta}{p+\ep}}[w]_{A_\infty}^{\frac{\theta}{(p+\ep)'}}[w]_{A_{q}}^{\frac{1-\theta}{p-\ep}}[w]_{A_\infty}^{\frac{1-\theta}{(p-\ep)'}}.
\end{align*}
By the relationship of $\theta$ the powers on $[w]_{A_q}$ are
$$\frac{\theta}{p-\ep}+\frac{1-\theta}{p+\ep}=\frac{1}{p}$$
and the powers on $[w]_{A_\infty}$ are
$$\frac{\theta}{(p-\ep)'}+\frac{1-\theta}{(p+\ep)'}=1-\frac{1}{p}=\frac{1}{p'}.$$
\end{proof}

\section{Further questions and examples}\label{Further}
In this Section we observe some facts about the mixed constants and note that our bounds in Theorem \ref{thm:exp0} can be significantly smaller than both of the bounds in Theorems \ref{thm:hlp} and \ref{thm:exp1}.  First we make some observations about the behavior of the one supremum constants.  If $\al>0$ the class of weights satisfying
$$[w]_{(A_p)^\al(A^{\exp}_\infty)^\beta}<\infty,\ \ \text{or}\ \ [w]_{(A_p)^\al(A_\infty)^\beta}<\infty,$$
is simply $A_p$, since
$$\max([w]_{A_p}^\al,[w]_{A_\infty}^\beta)\leq[w]_{(A_p)^\al(A_\infty)^\beta}\leq [w]_{A_p}^{\al+\beta}$$
and similarly inequality holds with the exponential constant.  For the exponential class we also have a monotonic behavior in the constants:
$$[w]_{(A_p)^\al(A^{\exp}_\infty)^\beta}\leq[w]_{(A_p)^\al(A_r)^\beta}\leq [w]_{(A_p)^\al(A_s)^\beta} \qquad 1\leq s\leq r< \infty.$$
We also have a monotonic behavior when the power on the $A_p$ part is $0<\al\leq 1$ and the power on the $A^{\exp}_\infty$ part is $1-\al$.  

\begin{observation} \label{obs:incr} Suppose $0<\al\leq \beta\leq 1$ and $w\in A_p$ then
$$[w]_{(A_p)^\al(A^{\exp}_\infty)^{1-\al}}\leq [w]_{(A_p)^\beta(A^{\exp}_\infty)^{1-\beta}}.$$
Moreover, for $\al<\beta$ the two quantities can be arbitrarily different.
\end{observation}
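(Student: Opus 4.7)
The plan is to split into the monotonicity inequality and the sharpness claim, treating the former by a one-line algebraic argument and the latter by constructing an explicit one-parameter family of weights.

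For the monotonicity, the crucial underlying fact is already recorded in the introduction: since $A_p(w,Q)$ is decreasing in $p$ by Jensen's inequality and $A_\infty^{\exp}(w,Q) = \lim_{p\to\infty} A_p(w,Q)$, one has $A_\infty^{\exp}(w,Q) \leq A_p(w,Q)$ for every cube $Q$. Fix $Q$ and write $a = A_p(w,Q)$, $b = A_\infty^{\exp}(w,Q)$, so $b/a \leq 1$. Since $0 < \alpha \leq \beta \leq 1$, we obtain
$$a^\alpha b^{1-\alpha} = a^\beta b^{1-\beta} \cdot (b/a)^{\beta-\alpha} \leq a^\beta b^{1-\beta}.$$
Taking the supremum over all $Q$ yields $[w]_{(A_p)^\alpha(A_\infty^{\exp})^{1-\alpha}} \leq [w]_{(A_p)^\beta(A_\infty^{\exp})^{1-\beta}}$, which is the first assertion.

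For the sharpness claim, the plan is to produce a family $\{w_M\}_{M\geq 1}\subset A_p$ on which the ratio of the two mixed constants is unbounded. The guiding identity, read off directly from the definitions with $\sigma = w^{1-p'}$, is
$$\frac{A_p(w,Q)}{A_\infty^{\exp}(w,Q)} = \left(\frac{\dashint_Q \sigma}{\exp(\dashint_Q \log \sigma)}\right)^{p-1},$$
so the gap between the two constants on a single cube is a power of the arithmetic-to-geometric mean ratio of $\sigma$, which we know is unbounded for spiky $\sigma$. A clean model is the two-valued weight $w_M(x) = 1$ on $[0,1/2]$, $w_M(x) = M$ on $[1/2,1]$, and $w_M \equiv 1$ outside $[0,1]$, parameterized by $M \geq 1$. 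A direct computation on $Q_0 = [0,1]$ yields, for $M$ large,
$$A_p(w_M, Q_0) \simeq M, \qquad A_\infty^{\exp}(w_M, Q_0) \simeq \sqrt{M},$$
so that, provided $Q_0$ realizes both mixed suprema up to a constant,
$$[w_M]_{(A_p)^\alpha(A_\infty^{\exp})^{1-\alpha}} \simeq M^{(1+\alpha)/2}, \qquad [w_M]_{(A_p)^\beta(A_\infty^{\exp})^{1-\beta}} \simeq M^{(1+\beta)/2},$$
giving a ratio of order $M^{(\beta-\alpha)/2}$, which tends to infinity as $M \to \infty$.

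The one nontrivial step is verifying that $Q_0$ genuinely controls both suprema, i.e., that the estimates above are not only lower bounds. This reduces to an elementary case analysis on cubes $Q \subseteq \mathbb{R}$: cubes contained entirely in one of the constancy regions $[0,1/2]$, $[1/2,1]$, or $\mathbb{R}\setminus[0,1]$ see $w_M$ as constant and contribute $A_p(w_M,Q) = A_\infty^{\exp}(w_M,Q) = 1$; cubes straddling $x = 1/2$ but strictly inside $[0,1]$ are more balanced than $Q_0$ and hence produce smaller values; cubes strictly larger than $Q_0$ dilute the spike by averaging in the constant-one exterior and again give smaller values. Once this routine case check is made, both mixed constants are indeed of the asserted order, and the ratio $M^{(\beta-\alpha)/2}$ may be made arbitrarily large. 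The only real obstacle is this final bookkeeping; the monotonicity itself is immediate from the definitions.
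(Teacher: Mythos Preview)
Your proof of the monotonicity inequality is correct and is essentially the same one-line Jensen-type argument as in the paper.

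The sharpness example, however, does not work. The step you label as ``routine case check''---that cubes straddling $x=1/2$ inside $[0,1]$ are ``more balanced than $Q_0$ and hence produce smaller values''---is false, and in fact the opposite occurs. Parametrize such a cube by the fraction $\theta\in(0,1)$ of its length lying in $[1/2,1]$. A direct computation gives, for large $M$,
\[
A_p(w_M,Q)^{\gamma}A_\infty^{\exp}(w_M,Q)^{1-\gamma}
\;\simeq\; \big((1-\theta)+\theta M\big)\,M^{-\theta(1-\gamma)}.
\]
At $\theta=\tfrac12$ (your cube $Q_0$) this is $\simeq M^{(1+\gamma)/2}$, but the right-hand side is \emph{maximized} near $\theta\simeq \tfrac{1}{(1-\gamma)\log M}$, where it is $\simeq \dfrac{M}{(1-\gamma)\log M}$, which is much larger than $M^{(1+\gamma)/2}$ for any $\gamma<1$. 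Consequently both mixed constants satisfy
\[
[w_M]_{(A_p)^{\gamma}(A_\infty^{\exp})^{1-\gamma}}\;\simeq\;\frac{M}{(1-\gamma)\log M}\qquad(0<\gamma<1),
\]
and the ratio between the $\alpha$- and $\beta$-constants is $\simeq \dfrac{1-\alpha}{1-\beta}$, a bounded number. So for $\alpha<\beta<1$ your family fails to separate the two quantities. (For $\beta=1$ one does get a logarithmic separation, but your stated mechanism is still incorrect.)

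The paper instead uses the power weight $w_\delta(x)=|x|^{(p-1)(n-\delta)}$. The crucial feature of this choice, absent in the two-valued example, is that $A_\infty^{\exp}(w_\delta,Q)\lesssim 1$ \emph{uniformly over all cubes $Q$}, while $A_p(w_\delta,Q)\simeq\delta^{-(p-1)}$ on cubes centered at the origin. This forces
\[
[w_\delta]_{(A_p)^{\gamma}(A_\infty^{\exp})^{1-\gamma}}\;\simeq\;\delta^{-\gamma(p-1)},
\]
so the ratio of the $\beta$- and $\alpha$-constants is $\simeq\delta^{-(\beta-\alpha)(p-1)}\to\infty$ as $\delta\to 0^+$. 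The moral is that to separate the two mixed constants one needs a weight whose $A_\infty^{\exp}$ constant stays bounded while its $A_p$ constant blows up; a step function cannot do this because the same unbalanced cubes that inflate $A_p$ also inflate $A_\infty^{\exp}$.
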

\begin{proof} Indeed, for a fixed cube $Q$
\begin{align*}\lefteqn{A_p(w,Q)^{\al}A^{\exp}_\infty(w,Q)^{1-\al}}\\
&=A_p(w,Q)^{\al}A^{\exp}_\infty(w,Q)^{1-\beta}A^{\exp}_\infty(w,Q)^{\beta-\al}\\
&\leq A_p(w,Q)^{\beta}A^{\exp}_\infty(w,Q)^{1-\beta}.
\end{align*}
To see that $[w]_{(A_p)^\al(A^{\exp}_\infty)^{1-\al}}$ can be arbitrarily smaller than than \\ $[w]_{(A_p)^\beta(A^{\exp}_\infty)^{1-\beta}}$ consider the $A_p$ power weight
$$w_\delta(x)=|x|^{(p-1)(n-\delta)}, \qquad 0<\delta<n.$$
Then for any $\al>0$
$$[w]_{(A_p)^\al(A^{\exp}_\infty)^{1-\al}}\simeq \delta^{-\al(p-1)}$$
which shows that the constants can be arbitrarily different as $\delta\ra 0^+$ if $\al\not=\beta$.
\end{proof}
With this in mind, we may put these estimates into a general frame work.  For the maximal function we have the following question.
\begin{question} \label{ques:max} If $1<p<\infty$ does the estimate 
$$\|M\|_{L^p(w)}\lesssim [\sigma]_{(A_p)^{\frac1p}(A_\infty)^{\frac{1}{p'}}}$$
hold?
\end{question}
The inequality in Question \ref{ques:max} is true if the $A_\infty$ part of the constant is replaced by  exponential $A_\infty$ and Theorems \ref{thm:maxW} shows that it holds up to a logarithmic factor.

For Calder\'on-Zygmund operators we see that there is possibly a whole range of mixed estimates.  We have the following questions.

\begin{question}\label{ques:expal} Suppose $0\leq \al\leq 1$, $1<p<\infty$ and $T$ is a Calder\'on-Zygmund operator.  Do either of the following estimates hold:
$$\|T\|_{L^{p,\infty}(w)}\lesssim [w]_{(A_p)^{\frac{1}{p-\al}}(A^{\exp}_\infty)^{1-\frac{1}{p-\al}}}$$
$$\|T\|_{L^{p}(w)}\lesssim [w]_{(A_p)^{\frac{1}{p-\al}}(A^{\exp}_\infty)^{1-\frac{1}{p-\al}}}+[\sigma]_{(A_{p'})^{\frac{1}{p'-\al}}(A^{\exp}_\infty)^{1-\frac{1}{p'-\al}}}?$$

\end{question}

\begin{question} \label{ques:Wal} Suppose $0\leq \al\leq 1$, $1<p<\infty$ and $T$ is a Calder\'on-Zygmund operator.  Do either of the following estimate hold
$$\|T\|_{L^{p,\infty}(w)}\lesssim [w]_{(A_p)^{\frac{1}{p-\al}}(A_\infty)^{1-\frac{1}{p-\al}}}$$
$$\|T\|_{L^{p}(w)}\lesssim [w]_{(A_p)^{\frac{1}{p-\al}}(A_\infty)^{1-\frac{1}{p-\al}}}+[\sigma]_{(A_{p'})^{\frac{1}{p'-\al}}(A_\infty)^{1-\frac{1}{p'-\al}}}?$$
\end{question}
Theorem \ref{thm:exp1} shows that Question \ref{ques:expal} holds for $\al=1$, while Theorems \ref{thm:exp0} and \ref{thm:W0} show, respectively, that the estimates in Questions \ref{ques:expal} and \ref{ques:Wal} hold for $\al=0$ up to logarithmic factors.  Conjecture \ref{conj:ult1} corresponds to an affirmative answer to Question \ref{ques:Wal} for $\al=0$.  Moreover, by Observation \ref{obs:incr} a positive answer to Question \ref{ques:expal} for $\al=0$ implies the corresponding estimates for $\al>0$.

\subsection{Examples} Let us compare the following bounds of $\|T\|_{L^p(w)}$:
\begin{equation}\label{HL} [w]_{A_p}^{\frac1p}([w]_{A_\infty}^{\frac{1}{p'}}+[\sigma]^{\frac1p}_{A_{\infty}})\end{equation}
\begin{equation}
\label{exp1} [w]_{(A_p)^{\frac{1}{p-1}}(A^{\exp}_\infty)^{1-\frac{1}{p-1}}}   
\end{equation}
\begin{equation}
\label{exp0} \big(\log(e[w]_{A_p})\big)^{\frac{1}{p}}[w]_{(A_p)^{\frac{1}{p}}(A^{\exp}_\infty)^{\frac{1}{p'}}}+\big(\log(e[\sigma]_{A_{p'}})\big)^{\frac{1}{p'}}[\sigma]_{(A_{p'})^{\frac{1}{p'}}(A^{\exp}_\infty)^{\frac{1}{p}}}.
\end{equation}

The constant \eqref{HL} is the bound from \eqref{eqn:LHPmixed}.  While the constants \eqref{exp1} and \eqref{exp0} are the bounds from Theorems \ref{thm:exp1} and \ref{thm:exp0} respectively.  The quantity \eqref{exp1} can be smaller than the quantity \eqref{HL} when $p\not=2$, because it is smaller than the right hand side of \eqref{eqn:mixedpr}, which in turn, was shown to be smaller than \eqref{HL} in \cite{Lern2011}.  Below we will give an example to show that \eqref{exp0} can be arbitrarily smaller than {\it both} \eqref{HL} and \eqref{exp1}.  

Our example is a modification of the one in \cite{Lern2011}.  The example in \cite{Lern2011} is a combination of an $A_1$ power weight and an $A_p$ power weight with the $A_p$ part sufficiently separated from the $A_1$ part.  Our example keeps track of how far apart the $A_p$ part is from the $A_1$ part.  

Consider the case $p>2$ (for the case $p<2$, interchange $p$ and $p'$).  For $0<\delta<1$ and $\frac{1}{p}<\al<\frac12$, define
$$w_\delta(x)=\left\{\begin{array}{ll}|x|^{(p-1)(1-\delta)} & x\in [-1,1] \\ |x-(\delta^{-\al}+1)|^{\delta-1} & x\in [\delta^{-\al},\delta^{-\al}+2] \\ 1 & \text{otherwise}. \end{array}\right.$$
Now $[w_\delta]_{A_p}\gtrsim \delta^{-(p-1)}$ by taking $Q=[0,1]$ and $[w_\delta]_{A_\infty}\gtrsim \delta^{-1}$ by taking $Q=[\delta^{-\al},\delta^{-\al}+1]$.  Therefore,
$$[w_\delta]_{A_p}^{\frac{1}{p}}([w_\delta]_{A_\infty}^{\frac{1}{p'}}+[\sigma_\delta]^{\frac1p}_{A_\infty})\gtrsim\delta^{-\frac{2}{p'}}.$$
On the other hand, since $\al<\frac{1}{2}$, we see that for small $\delta$ the constant 
$$[w_\delta]_{(A_p)^{\frac{1}{p-1}}(A_\infty^{\exp})^{1-\frac{1}{p-1}}}$$
attains it supremum on intervals containing $[0,\delta^{-\al}+1]$ (the smallest interval that contains the singularities of both $w_\delta$ and $\sigma_\delta$).  Thus we see that
$$[w_\delta]_{(A_p)^{\frac{1}{p-1}}(A_\infty^{\exp})^{1-\frac{1}{p-1}}}\simeq \delta^{-2(1-\al)}.$$
The constants $[w_\delta]_{(A_p)^{\frac1p}(A_\infty^{\exp})^{\frac{1}{p'}}}$ and $[\sigma_\delta]_{(A_{p'})^{\frac{1}{p'}}(A_\infty^{\exp})^{\frac1p}}$ either attain their supremum on an interval containing $[0,\delta^{-\al}+1]$ or small intervals containing $0$ or $\delta^{-\al}+1$.  Hence
$$\max\big([w_\delta]_{(A_p)^{\frac1p}(A_\infty^{\exp})^{\frac{1}{p'}}},[\sigma_\delta]_{(A_{p'})^{\frac{1}{p'}}(A_\infty^{\exp})^{\frac1p}}\big)\lesssim \delta^{-\max((1+\frac{1}{p'})(1-\al),1)}.$$
Finally,
$$\log(e[w_\delta]_{A_p})\simeq \log(e[\sigma_\delta]_{A_{p'}})\simeq \log(e\delta^{-1}).$$
In any case, because $\frac1p<\al<\frac12$, we have
$$\max\big((1+\frac{1}{p'})(1-\al),1\big)< 2(1-\al)<\frac{2}{p'}.$$
Letting $\delta \ra 0^+$, we see that \eqref{exp0} can be arbitrarily smaller than both \eqref{exp1} and \eqref{HL}.

\bibliographystyle{plain}
\bibliography{Ref}

\end{document}